\theoremstyle{plain}
\newtheorem{theorem}{Theorem}
\newtheorem*{theorem*}{Theorem}
\newtheorem*{Main}{Main Theorem}
\newtheorem*{maintheorem*}{Main Theorem}
\newtheorem{proposition}[theorem]{Proposition}%[chapter]
\newtheorem{corollary}[theorem]{Corollary}%[chapter]
\newtheorem{lemma}[theorem]{Lemma}%[chapter]
\newtheorem{claim}[theorem]{Claim}
\newtheorem*{conjecture*}{Conjecture}%[chapter]
\theoremstyle{definition}
\newtheorem{definition}{Definition}
\newtheorem*{definition*}{Definition}%[chapter]
\newtheorem*{example*}{Example}
\newtheorem*{notation*}{Notation}
\newtheorem*{notation-conv*}{Notation and convention}
\newtheorem*{convention*}{Convention}
\newtheorem*{hypothesis*}{Hypothesis}
\theoremstyle{remark}
\newtheorem{remark}{Remark}%[chapter]
\newcommand{\ie}{\emph{i.e.}\:}
\newcommand{\cf}{\emph{cf.}\:}
\newcommand{\mygeq}{\geqslant}
\newcommand{\ZZ}{{\mathbb Z}}
\newcommand{\CC}{{\mathbb C}}
\newcommand{\QQ}{{\mathbb Q}}
\newcommand{\IR}{{\mathbb R}}
\newcommand{\RR}{{\mathbb R}}
\newcommand{\NN}{{\mathbb N}}
\newcommand{\ii}{\mathbf{i}}
\newcommand{\jj}{\mathbf{j}}
\newcommand{\kk}{\mathbf{k}}
\newcommand{\SU}{{\mathrm{SU}(2)}}
\newcommand{\SL}{{\mathrm{SL}_2(\CC)}}
\newcommand{\rk}{\mathrm{rk}\,}
\newcommand{\Der}[2]{\mathrm{Der}_{#1} #2}
\newcommand{\Inn}[2]{\mathrm{Inn}_{#1} #2}
\newcommand{\Diff}[3]{\left. \frac{d}{d #2}{#1}\right\vert_{#3}}
\newcommand{\diff}[3]{\left. \frac{d #1}{d #2}\right\vert_{#3}}
\newcommand{\im}{\mathop{\mathrm{im}}\nolimits}
\newcommand{\su}{\mathfrak{su}(2)}
\newcommand{\sll}{\mathfrak{su}(2)}
\newcommand{\sllrho}{\mathfrak{su}(2)_{\rho}}
\newcommand{\Tor}[3]{\mathrm{Tor} ({#1}, {#2}, {#3})}
\newcommand{\bbrack}[1]{\lbrack \! \lbrack #1 \rbrack \! \rbrack}
\newcommand{\cbasis}[2][c]{\mathbf{#1}^{#2}}
\newcommand{\hbasis}[2][h]{\mathbf{#1}^{#2}}
\newcommand{\bord}{\partial}
\begin{document}

%%%%%%%%%%%%%%%%%%%%%%%%%%%%%%%%%%%%%%%%%%%%%%%%%%%%%%%%%%%%%%%%%%%%
% Subject classification 
%%%%%%%%%%%%%%%%%%%%%%%%%%%%%%%%%%%%%%%%%%%%%%%%%%%%%%%%%%%%%%%%%%%%
%
%

%%%%%%%%%%%%%%%%%%%%%%%%%%%%%%%%%%%%%%%%%%%%%%%%%%%%%%%%%%%%%%%%%%%%
%  title 
%%%%%%%%%%%%%%%%%%%%%%%%%%%%%%%%%%%%%%%%%%%%%%%%%%%%%%%%%%%%%%%%%%%%

\title{
Behavior of the $\SU$-Reidemeister torsion form by mutation
}

%%%%%%%%%%%%%%%%%%%%%%%%%%%%%%%%%%%%%%%%%%%%%%%%%%%%%%%%%%%%%%%%%%%%
%  author names and addresses
%%%%%%%%%%%%%%%%%%%%%%%%%%%%%%%%%%%%%%%%%%%%%%%%%%%%%%%%%%%%%%%%%%%%

\author{J\'er\^ome Dubois}

\address{Institut de Math\'ematiques de Jussieu -- Paris Rive Gauche, 
Universit\'e Paris Diderot--Paris 7, 
UFR de Ma\-th\'e\-ma\-ti\-ques,  
B\^atiment Sophie Germain, Case 7012
75205 Paris Cedex 13
France}
\email{dubois@math.jussieu.fr}
\date{\today}

%%%%%%%%%%%%%%%%%%%%%%%%%%%%%%%%%%%%%%%%%%%%%%%%%%%%%%%%%%%%%%%%%%%%
%  abstract
%%%%%%%%%%%%%%%%%%%%%%%%%%%%%%%%%%%%%%%%%%%%%%%%%%%%%%%%%%%%%%%%%%%%
\begin{abstract}
In this paper, we prove that the Reidemeister torsion twisted by the adjoint representation, which is considered as a 1-form, on the $\SU$-character variety of a knot exterior is invariant under mutation along a Conway sphere.
\end{abstract}

\keywords{Reidemeister torsion; Character variety; Three--dimensional manifold; Homology orientation; Mutation}
\subjclass[2000]{Primary: 57M25, Secondary: 57M27}
\maketitle

\tableofcontents

%%%%%%%%%%%%%%%%%%%%%%%%%%%%%%%%%%%%%%%%%%%%%%%%%%%%%%%%%%%%%%%%%%%%%
% end Topmatter
%%%%%%%%%%%%%%%%%%%%%%%%%%%%%%%%%%%%%%%%%%%%%%%%%%%%%%%%%%%%%%%%%%%%%

%%%%%%%%%%%%%%%%%%%%%%%%%%%%%%%%%%%%%%%%%%%%%%%%%%%%%%%%%%%%%%%%%%%%
% labeling rule in the body
%%%%%%%%%%%%%%%%%%%%%%%%%%%%%%%%%%%%%%%%%%%%%%%%%%%%%%%%%%%%%%%%%%%%
% Theorem -> {theorem:***}
% Proposition -> {prop:***}
% Lemma -> {lemma:***}
% Corollary -> {cor:***}
% Claim -> {claim:***}
% Fact -> {fact:***}
% Definition -> {def:***}
% Remark -> {remark:***}
% Hypothesis -> {hypo:***}
% \begin{equation} -> {eqn:***}
% \begin{figure} -> {fig:***}
%%%%%%%%%%%%%%%%%%%%%%%%%%%%%%%%%%%%%%%%%%%%%%%%%%%%%%%%%%%%%%%%%%%%%
% body of paper
%%%%%%%%%%%%%%%%%%%%%%%%%%%%%%%%%%%%%%%%%%%%%%%%%%%%%%%%%%%%%%%%%%%%%

\section{Introduction}

If $K$ is a hyperbolic knot in $S^3$, we know that each mutant of $K$
is also hyperbolic and that their volumes are the same
(see~\cite{Ruberman}).  It is well--known that the Alexander
polynomial (\ie the abelian Reidemeister torsion) is invariant by
mutation like most of the classical or quantum knot invariants.  
In~\cite{Tillmann2000, Tillmann2004}, S. Tillmann have studied the behavior of the character variety of a knot group by mutation, and proved that generically the character varieties of a knot group and one of its mutant are birationally equivalent. 

In this paper, we study the behavior of the twisted Reidemeister torsion with coefficients in the adjoint representation associated to a generic $\SU$-representation, viewed as a $1$-form on the character variety, under a mutation. In fact, we prove that this kind of twisted Reidemeister torsion, with sign, is invariant
by positive mutation. Our technique is to use a ``cut and past argument'' which involves Mayer--Vietoris sequences and Turaev's refined version of torsion. To be more precise, consider a knot $K \subset S^3$, a positive mutation sphere $(F, \tau)$ and the associated mutant knot $K^\tau$ (see Section~\ref{S:mutation} for a complete definition). We let $M_K = S^3 \setminus N(K)$ denotes its exterior (here $N(K)$ is an open tubular  neighborhood of $K$) and $G_K = \pi_1(M_K)$ its group. We also consider the so-called regular part of the character variety $\mathrm{Reg}(K)$ which is the (open) set of all irreducible representation $\rho$ which satisfies $\dim H^1_\rho(M_K) = 1$ where $H^1_\rho(M_K)$ is the first cohomology group of $M_K$ with coefficients in the Lie algebra $\su_{Ad \circ \rho}$. Associated to any regular representation $\rho\colon G_K \to \SU$ is the torsion form $\tau^K_\rho$ which is a $1$-form on the character variety. One can prove that the mutation $\tau$ induces a diffeomorphism $\mathfrak{t}$ between (open) subsets of the regular parts of the character varieties of mutant knots (see Theorem~\ref{theorem:invariance_tors_mutation}).

\begin{Main}
Let $K, K^\tau, \tau, \mathfrak{t}$ as above. Suppose that $\tau$ is positive, then in a neighborhood of any regular representation $\rho\colon \pi_1(M_K) \to \SU$ whose restriction to $\pi_1(F)$ is irreducible, one has:
\[
\tau^K = \mathfrak{t}^* \: \tau^{K^\tau}.
\]
\end{Main}

In~\cite{MP}, P. Menal--Ferrer and J. Porti study the behavior by mutation of the Reidemeister torsion twisted by a representation into $\SL$ especially in the case of hyperbolic knots, and prove that it is invariant at the discrete and faithful representation corresponding to the complete structure. In~\cite{DFJ}, N. Dunfield, S. Friedl and N. Jackson make some computer computations to calculate the twisted Alexander invariant for some knots and their mutant. They observe that this invariant is not invariant by mutation. In~\cite{KL2}, P. Kirk and C. Livingston have already observed that some special twisted Alexander polynomials are not invariant by mutation and which could be used to distinguished some pairs of mutant knots.

\section*{Organization} 
The paper is organized as follows. Section~\ref{S:preliminaries} deals with some reviews on the needed tools: twisted cohomology, Reidemeister torsion (with sign) and Mayer--Vietoris property for Reidemeister torsion. In Section~\ref{S:torsionform}, we explain in details the construction of the torsion form, which is a 1-form on the character variety of the knot exterior. In Section~\ref{S:mutation} we describe the concept of mutation, give a precise definition of the notion of positive mutation, and explain how to associate to any $\SU$-representation of the group of a knot $K$ a representation of the group of one of its mutant. Section~\ref{S:proof} deals with the detailed proof of the Main Theorem (Theorem~\ref{theorem:invariance_tors_mutation}): the invariance of the torsion form by positive mutation. In last Section~\ref{S:open} we discuss some open problems related to mutation and Reidemeister torsions theory.

\section*{Acknowledgments}

The author would like to warmly thanks S. Friedl, J. Porti for helpful comments and encouragements related to the present work. He also would like to thanks the referee for his interesting remarks and corrections.

%%%%%%%%%%%%%%%%%%%%%%%%%%%%%%%%%%%%%%%%%%%%%%%%%%%%%%%%%%%%%%%%%%%%%

\section{Preliminaries}\label{S:preliminaries}

\subsection{Twisted cohomology and derivations}

In this subsection we review a method to describe the first twisted cohomology group by using twisted derivations.

	Let $G$ be a group of finite type, and consider a representation $\rho\colon G \to \SU$. The composition of a representation $\rho\colon G \to \SU$ with the adjoint action $Ad$ of $\SU$ on $\su$ 
gives us the following representation, called the adjoint representation associated to $\rho$:
\begin{align*}
  Ad \circ \rho \colon G &\to Aut(\su) = \mathrm{SO}(3) \\
 \gamma &\mapsto (v \mapsto \rho(\gamma) v \rho(\gamma)^{-1})
\end{align*}
A \emph{twisted derivation} (twisted by $Ad \circ \rho$) is a mapping $d\colon G \to \su$ satisfying the following cocycle relation:
	$$d(g_1g_2) = d(g_1) + Ad_{\rho(g_1)} d(g_2), \; \text{for all }g_1, g_2 \in G.$$
We let $\mathrm{Der}_\rho(G)$ denote the set of twisted derivation of $G$. 
Among twisted derivations, we distinguish the inner ones. A map $\delta\colon G \to \su$ is an \emph{inner derivation},  if there exists $a \in \su$ such that
	$$\delta(g) = a - Ad_{\rho(g)} a,\; \text{for all }g\in G.$$
We let $\mathrm{Inn}_\rho(G)$ denote the set of interior derivations of $G$ twisted by $\rho$. Observe that, if $\rho\colon G \to \SU$ is irreducible, then $\mathrm{Inn}_\rho(G) \cong \su$.

Let $W$ be a finite CW--complex and let $G = \pi_1(W)$ be its group. The Lie algebra $\su$ endows a structure of a (right) $\ZZ[G]$--module via the action $Ad \circ \rho$. In the sequel, $\sllrho$ denote this structure. Let $\widetilde{W}$ be the universal cover of $W$, it is well--known that the complex $C_*(\widetilde{W}; \ZZ)$ is also a (left) $\ZZ[G]$--module by using the action of $G = \pi_1(W)$ on $\widetilde{W}$ by the covering transformations. The {\it $\sllrho$-twisted cochain complex} of $W$ is
\[
C^*(W; \sll_\rho) = \mathrm{Hom}_{\ZZ[G]}\left(C_*(\widetilde{W}; \ZZ); \sll_\rho\right).
\] 
This twisted complex $C^*(W; \sll_\rho)$ computes the $(Ad \circ \rho)$-twisted cohomology 
$$H^*_{\rho}(W) = H^*\left(W; \sll_\rho\right),$$
which is a finite dimensional real vector space. 
It is well--know that (see~\cite{HS:1971}):
 $$Z^1_\rho(G) \cong \Der{\rho}{(G)}, \; B^1_\rho(G) \cong \Inn{\rho}{(G)}, \; H^1_\rho(G) \cong \Der{\rho}{(G)}/\Inn{\rho}{(G)},$$ and 
$$H^0_\rho(G) = \su^{Ad \circ \rho(G)} = \left\{v \in \su \; |\; v = Ad_{\rho(g)}v, \; \forall g \in G\right\}.$$ 
For each irreducible representation $\rho$ of $G$ we thus have the short exact sequence
\begin{equation}\label{ExSH}
\xymatrix@1@-.6pc{0 \ar[r] & \su \ar[r] & \Der{\rho}{(G)} \ar[r] & H^1_\rho(G) \ar[r] & 0.}
\end{equation}

\[
\mathrm{Der}_\rho(G) / \mathrm{Inn}_\rho(G) \cong H^1_\rho(G).
\]

If $X$ is a $K(G,1)$-space (for example knot exteriors are $K(\pi_1, 1)$-spaces), then 
$$\mathrm{Der}_\rho(X) = \mathrm{Der}_\rho(G), \; \mathrm{Inn}_\rho(X) = \mathrm{Inn}_\rho(G).$$

\subsection{Reidemeister torsion}

We review the basic notions and results about the sign--determined Reidemeister torsion introduced by Turaev which are needed in this paper. Details can be found in Milnor's survey~\cite{Milnor:1966} and in Turaev's monograph~\cite{Turaev:2002}.

\subsubsection*{Torsion of a chain complex}
Let $C_* = (\xymatrix@1@-.5pc{0 \ar[r] & C_n \ar[r]^-{d_n} & C_{n-1} \ar[r]^-{d_{n-1}} & \cdots \ar[r]^-{d_1} & C_0 \ar[r] & 0})$ be a chain complex of finite dimensional vector spaces over $\mathbb{R}$. Choose  a basis $\mathbf{c}^i$ for $C_i$ and  a basis $\mathbf{h}^i$ for the $i$-th homology group $H_i = H_i(C_*)$. The torsion of $C_*$ with respect to these choices of bases is defined as follows.

Let $\mathbf{b}^i$ be a sequence of vectors in $C_{i}$ such that $d_{i}(\mathbf{b}^i)$ is a basis of $B_{i-1}= \im(d_{i} \colon C_{i} \to C_{i-1})$ and let $\widetilde{\mathbf{h}}^i$ denote a lift of $\mathbf{h}^i$ in $Z_i = \ker(d_{i} \colon C_i \to C_{i-1})$. The set of vectors $d_{i+1}(\mathbf{b}^{i+1})\widetilde{\mathbf{h}}^i\mathbf{b}^i$ is a basis of $C_i$. Let $[d_{i+1}(\mathbf{b}^{i+1})\widetilde{\mathbf{h}}^i\mathbf{b}^i/\mathbf{c}^i] \in \mathbb{C}^*$ denote the determinant of the transition matrix between those bases (the entries of this matrix are coordinates of vectors in $d_{i+1}(\mathbf{b}^{i+1})\widetilde{\mathbf{h}}^i\mathbf{b}^i$ with respect to $\mathbf{c}^i$). The \emph{sign-determined Reidemeister torsion} of $C_*$ (with respect to the bases $\mathbf{c}^*$ and $\mathbf{h}^*$) is the following alternating product (see~\cite[Definition 3.1]{Turaev:2000}):
\begin{equation}
\label{Def:RTorsion}
\mathrm{Tor}(C_*, \mathbf{c}^*, \mathbf{h}^*) = (-1)^{|C_*|} \cdot  \prod_{i=0}^n [d_{i+1}(\mathbf{b}^{i+1})\widetilde{\mathbf{h}}^i\mathbf{b}^i/\mathbf{c}^i]^{(-1)^{i+1}} \in \mathbb{R}^*.
\end{equation}
Here  $$|C_*| = \sum_{k\geqslant 0} \alpha_k(C_*) \beta_k(C_*),$$ where $\alpha_i(C_*) = \sum_{k=0}^i \dim C_k$ and  $\beta_i(C_*)  = \sum_{k=0}^i \dim H_k$.

The torsion $\mathrm{Tor}(C_*, \mathbf{c}^*, \mathbf{h}^*)$ does not depend on the choices of $\mathbf{b}^i$ and $\widetilde{\mathbf{h}}^i$. 
Note that if $C_*$ is acyclic (i.e. if $H_i = 0$ for all $i$), then $|C_*| = 0$.

%It does only depend on the equivalence classes of $\mathbf{c}^i$ and $\mathbf{h}^i$. More precisely, if $\mathbf{c'}^i$ is another basis of $C_i$ and $\mathbf{h'}^i$ another one of $H_i$, then we have the so-called \emph{basis change formula}
%\begin{equation}
%\label{EQ:changementdebase}
%\frac{\mathrm{tor}(C_*, \mathbf{c'}^*, \mathbf{h'}^*)}{\mathrm{tor}(C_*, \mathbf{c}^*, \mathbf{h}^*)} = \prod_{i=0}^n \left( \frac{[{\mathbf{c}'}^i/\mathbf{c}^i]}{[{\mathbf{h}'}^i/\mathbf{h}^i]}\right)^{(-1)^i}.
%\end{equation}

\subsubsection*{Torsion of a CW-complex}
	Let $W$ be a finite CW-complex; consider a representation $\rho\colon  \pi_1(W) \to \SU$. We let $\{e^{(i)}_1, \ldots, e^{(i)}_{n_i}\}$ denote the set of $i$-dimensional cells of $W$. Choose a lift $\tilde{e}^{(i)}_j$  of the cell $e^{(i)}_j$ in the universal cover $\widetilde{W}$ of $W$ and choose an arbitrary order and an arbitrary orientation for them. Thus, for each $i$, $\mathbf{c}^{i} = \{ \tilde{e}^{(i)}_1, \ldots, \tilde{e}^{(i)}_{n_i} \}$ is a $\ZZ[\pi_1(W)]$-basis of $C_i(\widetilde{W}; \ZZ)$ and we associate to it the corresponding ``dual" basis over $\mathbb{R}$ $$\mathbf{c}^{i}_{\su} = \left\{ {\tilde{e}^{(i)}_{1, \ii}, \tilde{e}^{(i)}_{1, \jj}, \tilde{e}^{(i)}_{1, \kk}, \ldots, \tilde{e}^{(i)}_{n_i, \ii}, \tilde{e}^{(i)}_{n_i, \jj}, \tilde{e}^{(i)}_{n_i, \kk}}\right\}$$ of $C^i(W; Ad \circ \rho) = \mathrm{Hom}_{\pi_1(X)}(C_i(\widetilde{W}; \ZZ), \su)$. Here 
\[
\ii = \left(\begin{array}{cc}i & 0 \\0 & -i\end{array}\right), \; \jj = \left(\begin{array}{cc} 0 & 1 \\1 & 0\end{array}\right) \text{ and }\kk = \left(\begin{array}{cc}0 & i \\ -i & 0\end{array}\right).
\]
	
If $\mathbf{h}^{i}$ is a basis of $H^i_\rho(W)$ then $\mathrm{Tor}(C^*(W; Ad\circ \rho), \mathbf{c}^*_{\su}, \mathbf{h}^{*}) \in \IR^*$ is well-defined.

	The cells $\{\tilde{e}^{(i)}_j\}_{0 \leqslant i \leqslant \dim W, 1 \leqslant j \leqslant n_i}$ are in one-to-one correspondence with the cells of $W$ and their order and orientation induce an order and an orientation for the cells $\{e^{(i)}_j\}_{0 \leqslant i \leqslant \dim W, 1 \leqslant j \leqslant n_i}$. We thus produce a basis over $\IR$ for $C_*(W; \IR)$ which is denoted $c^*$. 
	
	Choose a \emph{homology orientation} of $W$, \ie an orientation of the real vector space $H_*(W; \IR) = \bigoplus_{i\geqslant 0} H_i(W; \IR)$; let $\mathfrak{o}$ denote such an orientation. Provide each vector space $H_i(W; \IR)$ with a reference basis $h^i$ such that the basis $h^* = \{h^0, \ldots, h^{\dim W}\}$ of $H_*(W; \IR)$ is {positively oriented} with respect to the cohomology orientation $\mathfrak{o}$. Compute the sign-determined Reidemeister torsion $\mathrm{Tor}(C_*(W; \IR), c^*, h^{*}) \in \IR^*$  of the resulting based and cohomology based chain complex $C_*(W; \IR)$ and consider its sign $\tau_0 = \mathrm{sgn}\left(\mathrm{Tor}(C_*(W; \IR), c^*, h^{*})\right) \in \{\pm 1\}$. Further observe that $\tau_0$ does not depend on the choice of the positively oriented basis $h^*$. 
	
	The {sign-determined Reidemeister torsion} of the cohomology oriented CW-com\-plex $W$ twisted by the representation $Ad \circ \rho$ is the product
\[
\mathrm{Tor}(W; Ad\circ \rho, \mathbf{h}^{*}, \mathfrak{o}) = \tau_0 \cdot \mathrm{Tor}(C^*(W; Ad\circ \rho), \mathbf{c}^*_{\su}, \mathbf{h}^{*}) \in \IR^*.
\]
	The torsion $\mathrm{Tor}(W; Ad\circ \rho, \mathbf{h}^{*}, \mathfrak{o})$  is the \emph{$(Ad \circ \rho)$-twisted Reidemeister torsion} of $W$. It is well-defined. It does not depend on the choice of the lifts $\tilde{e}^{(i)}_j$ nor on the order and orientation of the cells (because they appear twice). Finally, it just depends on the conjugacy class of $\rho$.
	
	One can prove that $\mathrm{Tor}$ is invariant under cellular subdivision, homeomorphism class and simple homotopy type. In fact, it is precisely the sign $(-1)^{|C_*|}$ in~(\ref{Def:RTorsion}) which ensures all these properties of invariance (see Turaev's monographs~\cite{Turaev:2000, Turaev:2002}).

\subsection{Mayer--Vietoris sequence}

In this subsection we briefly review the so-called Ma\-yer-Vietoris formula for Reidemeister torsions which will be used in the proof of the main results. This formula is based on the multiplicativity property of Reidemeister torsions.

\begin{theorem}[Mayer-Vietoris formula]\label{FMV}
	Let $W$ be a finite CW-complex, let $W_1$ and $W_2$ be two closed subcomplexes such that $W=W_1 \cup W_2$ and $V=W_1 \cap W_2$ is not void. Consider any representation $\rho\colon \pi_1(W) \to \SU$ and let us $\rho_i = \rho_{|\pi_1(W_i)}$ and $\rho_V=\rho_{|\pi_1(V)}$ denote its restrictions respectively to $\pi_1(W_i)$ and $\pi_1(V)$. If  $\mathcal{H}$ denotes the Mayer--Vietoris sequence in twisted cohomology associated to the splitting  $W = W_1 \cup_V W_2$ and to the representation $\rho$, then one has the Mayer--Vietoris formula:
\[
\mathrm{Tor}(W_1; Ad \circ \rho_1) \cdot \mathrm{Tor}(W_2; Ad \circ \rho_2) = (-1)^{\varepsilon + \alpha} \mathrm{Tor}(W; Ad \circ \rho) \cdot \mathrm{Tor}(V; Ad \circ \rho_V) \cdot \mathrm{tor}(\mathcal{H}).
\] 
Here
\[
\alpha = \alpha(C^*(W; Ad \circ \rho), C^*(V; Ad\circ \rho_V))
\]
and
\[
\varepsilon = \varepsilon(C^*(W; Ad \circ \rho), C^*(W_1; Ad \circ \rho_1) \oplus C^*(W_2; Ad \circ \rho_2), C^*(V; Ad\circ \rho_V)).
\]
\end{theorem}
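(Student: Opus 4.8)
The plan is to derive the formula from a single application of the multiplicativity property of the sign-determined torsion to the short exact sequence of twisted cochain complexes furnished by the decomposition $W = W_1 \cup_V W_2$. Each cell of $W$ lies either in $V$ or in exactly one of $W_1 \setminus V$, $W_2 \setminus V$, so dualizing the usual Mayer--Vietoris short exact sequence of $\ZZ[\pi_1(W)]$-chain complexes of universal covers and applying $\mathrm{Hom}_{\ZZ[\pi_1(W)]}(-, \sllrho)$ yields
\[
0 \to C^*(W; Ad \circ \rho) \to C^*(W_1; Ad \circ \rho_1) \oplus C^*(W_2; Ad \circ \rho_2) \to C^*(V; Ad \circ \rho_V) \to 0,
\]
whose associated long exact cohomology sequence is exactly the Mayer--Vietoris sequence $\mathcal{H}$ appearing in the statement.

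First I would verify that the geometric bases are compatible with this sequence. The dual cell bases $\mathbf{c}^*_{\su}$ are assembled from chosen lifts of the cells together with the quaternionic frame $\ii, \jj, \kk$; because the cells of $V$ lift to cells of $W_1$ and of $W_2$ and the cells of $W$ are partitioned as above, the distinguished basis of the middle term agrees, up to the ordering and sign conventions built into the multiplicativity lemma, with the concatenation of a basis of $C^*(W; Ad \circ \rho)$ and a lift of a basis of $C^*(V; Ad \circ \rho_V)$. With this compatibility established, the multiplicativity property applies verbatim and produces the relation between the three twisted torsions $\mathrm{Tor}(C^*(\,\cdot\,; Ad \circ \rho))$ together with the factor $\mathrm{tor}(\mathcal{H})$ and a combinatorial sign; splitting the torsion of the direct-sum middle term into the product over $W_1$ and $W_2$ contributes a further sign. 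These two signs are exactly $(-1)^{\varepsilon}$ and $(-1)^{\alpha}$ with $\varepsilon$ and $\alpha$ as defined in the statement, and checking this identification is where the bulk of the work lies.

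It then remains to incorporate the sign-determined refinement: each torsion $\mathrm{Tor}(\,\cdot\,; Ad \circ \rho)$ carries the homology-orientation factor $\tau_0$ coming from the untwisted real chain complex $C_*(\,\cdot\,; \IR)$. I would fix a homology orientation $\mathfrak{o}$ of $W$ and orientations of $W_1, W_2, V$ compatible with the same Mayer--Vietoris sequence, run the identical short-exact-sequence argument on the real complexes, and check that the four resulting $\tau_0$ factors are already accounted for by the combinatorial signs, so that the displayed formula holds with sign $(-1)^{\varepsilon + \alpha}$ and no extra contribution.

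The principal obstacle is sign bookkeeping rather than conceptual difficulty: the underlying argument is a single short exact sequence and one invocation of multiplicativity, but confirming that the total sign is precisely $(-1)^{\varepsilon + \alpha}$ requires carefully reconciling the ordering conventions for the cell bases, the direct-sum decomposition of the middle term, and the homology-orientation normalization against Turaev's definitions of $\alpha$ and $\varepsilon$. This delicate sign control is exactly what Turaev's refinement is designed to furnish, and it is where I expect the proof to concentrate its effort.
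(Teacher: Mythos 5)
Your proposal follows essentially the same route as the paper: the paper's proof is a one-line application of Turaev's Multiplicativity Lemma to the short exact sequence of twisted cochain complexes $0 \to C^*(W) \to C^*(W_1) \oplus C^*(W_2) \to C^*(V) \to 0$ inducing $\mathcal{H}$, with the basis-compatibility and sign bookkeeping deferred to the references (Turaev, and Porti's Proposition 0.11). Your outline simply makes explicit the steps the paper leaves to those citations, and you even correct a small typographical slip in the paper's displayed sequence, where the coefficient subscripts on the first and last terms are interchanged.
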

\begin{proof}
	This formula follows from the Multiplicativity Lemma for the torsions (see~\cite{Turaev:2002}) applied to the following sequence of complexes:
\[
\xymatrix@1@-.5pc{0 \ar[r] & C^*(W; Ad\circ \rho_V) \ar[r] & C^*(W_1; Ad \circ \rho_1) \oplus C^*(W_2; Ad \circ \rho_2) \ar[r] & C^*(V; Ad \circ \rho) \ar[r] & 0,}
\]
which induces the Mayer--Vietoris long exact sequence $\mathcal{H}$.
(see~\cite[Proposition 0.11]{Porti:1997}  for details).
\end{proof}

\section{Review on the construction of the torsion form}
\label{S:torsionform}

In this section, we are interested with knots. So, let $K$ be a knot in $S^3$ and consider its exterior $M_K = S^3 \setminus V(K)$, where $V(K)$ is an open tubular neighborhood  of $K$. Let $G_K = \pi_1(M_K)$ be the fundamental group of $M_K$, we call it the group of $K$. Observe that $M_K$ is a  compact connected three--dimensional manifold whose boundary consists in a single two--dimensional torus $\partial M_K = T^2$. %Let $\rho \colon G_K \to \SU$ be any representation of $G_K$ into $\SU$.
 
It is well--known (see for example~\cite{Porti:1997} or~\cite{JDFourier}) that, for any representation $\rho \colon G_K \to \SU$, the $(Ad \circ \rho)$-twisted  cohomology never vanishes, actually one can prove using Poincar\'e duality that:
\[
\dim_{\RR} H^1_\rho(M_K) \geqslant 1.
\]
For an irreducible representation $\rho\colon G_K \to \SU$, we moreover know that $H^0_\rho(M_K) = 0$. As a consequence, using the fact that the Euler characteristic of $M_K$ vanishes, one has:
\[
\dim_{\RR} H^1_\rho(M_K) = \dim_{\RR} H^2_\rho(M_K),
\]
for any irreducible representation $\rho \colon G_K \to \SU$.

\subsection{The notion of regular representation}
In this subsection, we review the notion of regularity for representations of a knot group in $\SU$.

\begin{definition}
A representation $\rho \colon G_K \to \SU$ is called \emph{regular}, if $\dim_{\RR} H^1_\rho(M_K) = 1$.
\end{definition}

Of course, if a representation $\rho \colon G_K \to \SU$ is regular, then every conjugate of $\rho$ is also regular. Thus the notion of regularity is well--defined not only for representations but for characters. 
Important properties concerning the set $\mathrm{Reg}(K)$ of regular representations up to conjugation is summarized into the following result:

\begin{theorem}[see~\cite{Heu:2003, JDFourier}]\label{thm:reg}
{ The set $\mathrm{Reg}(K)$ is a one--dimensional manifold, and if $\rho \in \mathrm{Reg}(K)$ then the tangent space to the character variety $T_{\rho} X(M_K)$ is isomorphic to $H^1_\rho(M_K)$.} 
\end{theorem}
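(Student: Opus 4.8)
The plan is to work upstairs in the representation variety $R(K) = \mathrm{Hom}(G_K, \SU)$, a real affine algebraic set on which $\SU$ acts by conjugation, and to deduce the local structure of the character variety $X(M_K) = R(K)/\!/\SU$ near the class of a regular representation. Fix an irreducible $\rho$ with $\dim_\RR H^1_\rho(M_K) = 1$. First I would record the two standard tangent-space identifications from Weil's deformation theory: the Zariski tangent space of $R(K)$ at $\rho$ is the space of cocycles $Z^1_\rho(G_K) = \Der{\rho}{(G_K)}$, and the tangent space to the conjugation orbit of $\rho$ is the space of coboundaries $B^1_\rho(G_K) = \Inn{\rho}{(G_K)}$. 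Since $\rho$ is irreducible we have $H^0_\rho(M_K) = 0$, so the stabiliser of $\rho$ is the centre $\{\pm \I\}$, the orbit is a smooth $3$-dimensional submanifold, and $B^1_\rho(G_K) \cong \su$ has dimension $3$.

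Next comes the dimension count. Because $M_K$ is aspherical, $\Der{\rho}{(M_K)} = \Der{\rho}{(G_K)}$, and the short exact sequence (\ref{ExSH}) gives
\[
\dim_\RR Z^1_\rho(G_K) = \dim_\RR \su + \dim_\RR H^1_\rho(M_K) = 3 + 1 = 4 .
\]
Thus the Zariski tangent space to $R(K)$ at $\rho$ has dimension exactly $4$, which is $3$ (the orbit) plus $1$. If I can show that $R(K)$ is actually \emph{smooth} of dimension $4$ at $\rho$, then near $[\rho]$ the character variety is the quotient of this smooth $4$-manifold by the locally free $3$-dimensional conjugation action, hence a smooth $1$-manifold, and the tangent space fits into $0 \to B^1_\rho \to Z^1_\rho \to T_{[\rho]}X(M_K) \to 0$, identifying $T_{[\rho]}X(M_K) \cong H^1_\rho(M_K)$ as claimed.

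The crux is therefore the smoothness of $R(K)$ at $\rho$, and this is where the three-dimensional topology enters through Poincar\'e--Lefschetz duality. The obstruction to integrating an infinitesimal deformation $u \in Z^1_\rho(G_K)$ to an honest path of representations is a class in $H^2_\rho(M_K)$ built from the cup product paired with the Lie bracket of $\su$; since $\dim_\RR H^2_\rho(M_K) = \dim_\RR H^1_\rho(M_K) = 1$ (the equality recorded in the previous section), a priori the deformation could be obstructed. To control the obstruction I would restrict to the boundary torus $\bord M_K = T^2$: the $\SU$-invariant Killing form on $\su$ makes the coefficients self-dual, so the half-lives-half-dies principle applies and the image of the restriction $H^1_\rho(M_K) \to H^1_{\rho_\bord}(\bord M_K)$ is a Lagrangian, hence $1$-dimensional, subspace of the $2$-dimensional symplectic space $H^1_{\rho_\bord}(\bord M_K)$. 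As $\dim_\RR H^1_\rho(M_K) = 1$, this restriction is injective; comparing with the character variety of the torus, which is smooth of dimension $2$ at the non-central character $[\rho_\bord]$, one produces a genuine $1$-parameter family of deformations of $\rho$ tangent to $u$ (equivalently, the duality pairing forces the quadratic obstruction term to vanish). This realises the full $1$-dimensional normal direction of the orbit, so $R(K)$ is smooth of dimension $4$ at $\rho$.

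Finally I would assemble the global statement. The function $[\rho] \mapsto \dim_\RR H^1_\rho(M_K)$ is upper semicontinuous and, by the Poincar\'e-duality lower bound recalled above, always at least $1$; hence the regularity condition $\dim_\RR H^1_\rho(M_K) = 1$ is open, and irreducibility is open as well, so $\mathrm{Reg}(K)$ is an open subset of $X(M_K)$. Covering it by the smooth charts produced in the previous step shows that $\mathrm{Reg}(K)$ is a $1$-dimensional manifold with $T_\rho X(M_K) \cong H^1_\rho(M_K)$ at each of its points. I expect the genuinely hard step to be the smoothness of $R(K)$, that is, the vanishing of the $H^2_\rho(M_K)$-valued obstruction: everything else is linear algebra and the formal properties of the conjugation action, whereas this step is exactly where the orientability of $M_K$, Poincar\'e--Lefschetz duality, and the boundary-torus analysis must be combined.
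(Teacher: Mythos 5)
First, a caveat: the paper does not prove Theorem~\ref{thm:reg} at all --- it is quoted from the references \cite{Heu:2003, JDFourier} --- so there is no internal proof to compare against. Your architecture is the standard one from those sources: identify $Z^1_\rho(G_K)$ and $B^1_\rho(G_K)$ with the tangent space to the representation scheme and to the conjugation orbit, count $\dim_\RR Z^1_\rho(G_K)=3+\dim_\RR H^1_\rho(M_K)=4$ from the sequence~(\ref{ExSH}) and $H^0_\rho(M_K)=0$, reduce everything to smoothness of $R(K)$ at $\rho$, and then quotient by the free $\mathrm{SO}(3)$-action; the openness argument via upper semicontinuity at the end is also fine.

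The gap sits exactly in the step you flag as the crux. Injectivity of the restriction $H^1_\rho(M_K)\to H^1_{\rho}(\bord M_K)$ (half-lives-half-dies plus $\dim_\RR H^1_\rho(M_K)=1$) does \emph{not} by itself ``produce a genuine $1$-parameter family of deformations tangent to $u$'': an infinitesimal deformation whose boundary restriction integrates need not itself integrate, precisely because the obstruction lives in $H^2_\rho(M_K)\neq 0$. The argument that actually closes this is the following. Obstruction classes are natural under restriction; the restriction of $\rho$ to $\pi_1(\bord M_K)\cong\ZZ^2$ is non-central, hence an unobstructed (smooth) point of the torus representation variety, so every obstruction class dies in $H^2_{\rho}(\bord M_K)$; and $i^*\colon H^2_\rho(M_K)\to H^2_{\rho}(\bord M_K)$ is injective for regular $\rho$ --- this is exactly the paper's Lemma on $i^*$ (Porti's Corollary 3.23), which is equivalent, via the long exact sequence of the pair and duality, to your $H^1$ statement, but it is the $H^2$ version you must invoke. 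This kills \emph{all} higher obstructions, not only the quadratic one your parenthetical mentions (and which you do not actually compute), and yields formal smoothness; one then still needs Artin's approximation theorem to convert the formal deformation into a genuine one and conclude that $R(K)$ is smooth of dimension $4$ at $\rho$. With that step repaired, the quotient and openness parts of your proof go through as written.
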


Moreover, for a regular representation $\rho \colon G_K \to \SU$, one has $\dim_{\RR} H^2_\rho(M_K) = 1$.

\subsection{Construction of the torsion form}
To construct a Reidemeister torsion, especially in a non acyclic context, we need to define some distinguished bases for homology groups. Let us review here Porti's construction of a distinguished basis for $H^2_\rho(M_K)$ (see~\cite{Porti:1997, JDFourier}).

To fix the notation, we suppose that $S^3$ is oriented and that we have chosen an orientation for the knot $K$. The knot exterior $M_K$ inherits an orientation from that of $S^3$ and its boundary $\partial M_K$ is also oriented using the convention ``the inward pointing normal vector in the last position". As $K$ is oriented, the peripheral system $(\mu, \lambda)$ inherits an orientation as follows. First, we orient $\mu$ by the rule $\ell k (\mu, K) = +1$, and $\lambda$ is oriented using the intersection number defined by the orientation of $\partial M_K$: $\mathrm{int}(\mu, \lambda) = +1$.

Let $\rho \colon G_K \to \SU$ be an irreducible representation. Observe that $\rho(\mu) \ne \pm \mathbf{1}$ (because, in a Wirtinger presentation of $G_K$ each generator is conjugate to $\mu$, and as $\rho$ is irreducible $\rho(G_K) \not\subset \{\pm  \mathbf{1}\}$), as a consequence there exist only one couple $(\theta, P_\rho) \in (0, \pi) \times S^2$ such that
\[
\rho(\mu) = \cos(\theta) + \sin(\theta) P_\rho.
\]
The vector $P_\rho$ generates the common axe of the rotations $Ad \circ \rho(\pi_1(\partial M_K))$, and thus $P_\rho$ generates $H^0_\rho(\partial M_K)$. 

Let us denote by $c$ the generator of $H^2(\partial M_K; \ZZ) = \mathrm{Hom}(H_2(\partial M_K; \ZZ), \ZZ)$ corresponding to the fundamental class $\lbrack \! \lbrack \partial M_K \rbrack \! \rbrack \in H_2(\partial M_K; \ZZ)$ induced by the orientation of $\partial M_K$. With such notation, one has (see~\cite{Porti:1997}, Proposition 1.3.2):
\begin{lemma}
The map $\phi^{(*)}_{P_\rho}\colon H^*_\rho(\partial M_K) \to H^*(\partial M_K; \RR)$ defined using the cup--product coupled with the killing form $\phi^{(*)}_{P_\rho}(z) = P_\rho  \smile z$ is an isomorphism.
\end{lemma}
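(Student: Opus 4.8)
The plan is to reduce the statement to a computation of the twisted cohomology of the boundary torus $\partial M_K = T^2$ by splitting the coefficient module along the axis $P_\rho$. Since the restriction of $\rho$ to $\pi_1(\partial M_K)$ takes values in the maximal torus of $\SU$ whose associated rotation axis is $P_\rho$, the adjoint module $\su_\rho$ decomposes as a $\ZZ[\pi_1(\partial M_K)]$-module into the orthogonal sum $\RR P_\rho \oplus P_\rho^\perp$, where $P_\rho^\perp$ denotes the plane in $\su$ orthogonal to $P_\rho$ for the Killing form $B$. On the first summand the action $Ad\circ\rho$ is trivial, so $H^*(\partial M_K; \RR P_\rho) \cong H^*(\partial M_K; \RR)$ has Betti numbers $1, 2, 1$; on the second summand the generator $\mu$ acts, after identifying $P_\rho^\perp \cong \CC$, as multiplication by $e^{2i\theta}$, since conjugation by $\cos\theta + \sin\theta P_\rho$ rotates $\su$ about $P_\rho$ through the angle $2\theta$.

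First I would record that, because $\theta \in (0, \pi)$ forces $2\theta \not\equiv 0 \pmod{2\pi}$, one has $e^{2i\theta} \ne 1$, so the operator $Ad\circ\rho(\mu) - 1$ is invertible on $P_\rho^\perp$. Computing $H^*(\partial M_K; P_\rho^\perp)$ from the standard Koszul complex of $\pi_1(\partial M_K) = \ZZ^2$, invertibility of $Ad\circ\rho(\mu)-1$ makes both the invariants and the coinvariants of $P_\rho^\perp$ vanish, so $H^0$ and $H^2$ are zero; since $\chi(\partial M_K) = 0$ then forces $H^1 = 0$ as well, the summand $P_\rho^\perp$ contributes nothing. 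Hence $H^*_\rho(\partial M_K) \cong H^*(\partial M_K; \RR P_\rho)$, with the same Betti numbers $1, 2, 1$ as $H^*(\partial M_K; \RR)$; in particular the source and target of $\phi^{(*)}_{P_\rho}$ have equal dimension in each degree.

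Next I would identify the map itself. The Killing form $B$ is $Ad$-invariant, hence descends to a pairing of $\ZZ[\pi_1(\partial M_K)]$-modules $\su_\rho \otimes \su_\rho \to \RR$ and yields a cup product $H^i_\rho(\partial M_K) \otimes H^j_\rho(\partial M_K) \to H^{i+j}(\partial M_K; \RR)$; cupping with the class $P_\rho \in H^0_\rho(\partial M_K)$ is exactly $\phi^{(*)}_{P_\rho}$. Under the identification of the previous paragraph a class of $H^*_\rho(\partial M_K)$ is represented by $x \otimes P_\rho$ with $x \in H^*(\partial M_K; \RR)$, and at the cochain level the Killing form contracts the coefficients, giving $\phi^{(*)}_{P_\rho}(x \otimes P_\rho) = B(P_\rho, P_\rho)\, x$. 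As $B$ is nondegenerate on $\su$ and $P_\rho \ne 0$, the scalar $B(P_\rho, P_\rho)$ is nonzero, so $\phi^{(*)}_{P_\rho}$ is multiplication by a nonzero constant under the identification with $H^*(\partial M_K; \RR)$ and is therefore an isomorphism in every degree.

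The step I expect to be delicate is the vanishing of $H^*(\partial M_K; P_\rho^\perp)$: everything hinges on the hypothesis $\theta \in (0, \pi)$, which guarantees that the peripheral rotation $e^{2i\theta}$ is distinct from $1$, and hence that $Ad\circ\rho(\mu) - 1$ is invertible on the orthogonal plane. Once this nondegeneracy is in place, the remainder is bookkeeping with the cup product and the nondegeneracy of the Killing form.
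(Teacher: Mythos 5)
Your argument is correct, and it is essentially the standard proof: the paper itself does not prove this lemma but delegates it to Porti's monograph (Proposition 1.3.2), where the same strategy appears --- split $\su_\rho$ over $\pi_1(\partial M_K)$ into the invariant line $\RR P_\rho$ and its Killing-orthogonal plane, kill the cohomology of the plane using $\theta\in(0,\pi)$, and observe that cupping with $P_\rho$ acts on the trivial summand as multiplication by the nonzero scalar $B(P_\rho,P_\rho)$. No gaps.
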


The construction of the distinguished generator of $H^2_\rho(M_K)$ is based on the following result (see~\cite{Porti:1997}, Corollary 3.23):
\begin{lemma}
If $\rho\colon G_K \to \SU$ is regular, then the homomorphism $i^*\colon H^2_\rho(M_K) \to H^2_\rho(\partial M_K)$ induced by the inclusion $i \colon \partial M_K \hookrightarrow M_K$ is an isomorphism.
\end{lemma}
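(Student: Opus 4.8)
The plan is to deduce the isomorphism from the long exact cohomology sequence of the pair $(M_K, \partial M_K)$ together with Poincar\'e--Lefschetz duality. First I would record that both spaces in question are one-dimensional: $\dim_\RR H^2_\rho(M_K) = 1$ because $\rho$ is regular, while $\dim_\RR H^2_\rho(\partial M_K) = 1$ by the previous lemma, since the isomorphism $\phi^{(2)}_{P_\rho}$ identifies $H^2_\rho(\partial M_K)$ with $H^2(\partial M_K; \RR) = H^2(T^2; \RR) \cong \RR$. Consequently it is enough to show that $i^*$ is surjective, for then a surjection between one-dimensional spaces is automatically bijective.

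Next I would write down the relevant segment of the long exact sequence of the pair in $(Ad \circ \rho)$-twisted cohomology:
\[
\cdots \to H^2_\rho(M_K, \partial M_K) \to H^2_\rho(M_K) \to H^2_\rho(\partial M_K) \to H^3_\rho(M_K, \partial M_K) \to \cdots
\]
the middle arrow being $i^*$. The key input is the vanishing of the group immediately to the right of $i^*$'s target. To obtain it I would invoke Poincar\'e--Lefschetz duality for the compact oriented $3$-manifold $M_K$, using that the coefficient module is self-dual: the Killing form on $\su$ is a nondegenerate $Ad \circ \rho$-invariant bilinear form, so $\sllrho$ is isomorphic to its dual as a $\ZZ[G_K]$-module. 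Duality then gives $\dim_\RR H^3_\rho(M_K, \partial M_K) = \dim_\RR H^0_\rho(M_K)$, and since $\rho$ is irreducible we have $H^0_\rho(M_K) = 0$, whence $H^3_\rho(M_K, \partial M_K) = 0$.

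Finally, exactness of the displayed sequence at $H^2_\rho(\partial M_K)$ forces $i^*$ to be surjective, and by the dimension count above $i^*$ is therefore an isomorphism, as claimed.

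I expect the only delicate point to be the bookkeeping in the duality step: carefully matching the relative twisted cohomology group $H^3_\rho(M_K, \partial M_K)$ with $H^0_\rho(M_K)$, and checking that the Killing form really does furnish the self-duality of $\sllrho$ required to run Poincar\'e--Lefschetz duality with these twisted coefficients. Everything else reduces to feeding three dimension counts into the long exact sequence.
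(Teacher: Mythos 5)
Your proof is correct. The paper itself gives no argument for this lemma (it simply cites Porti, Corollaire~3.23 of his monograph), and your route --- the long exact sequence of the pair $(M_K,\partial M_K)$, twisted Poincar\'e--Lefschetz duality using the $Ad$-invariance of the Killing form on $\su$ to get $H^3_\rho(M_K,\partial M_K)\cong H^0_\rho(M_K)^*=0$ from irreducibility, and the two one-dimensionality counts --- is exactly the standard argument behind that citation, so there is nothing to add.
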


Combining these two lemmas, we construct an isomorphism:
\[
\phi^{(2)}_{P_\rho} \circ i^* \colon H^2_\rho(M_K) \to H^2(\partial M_K; \RR),
\]
and the distinguished generator $h^{(2)}_\rho \in H^2_\rho(M_K)$ is given by setting:
\[
h^{(2)}_\rho = {\left(\phi^{(2)}_{P_\rho} \circ i^*\right)}^{-1}(c).
\]

\begin{remark}
Observe that the distinguished generator $h^{(2)}_\rho$ does not depend on the orientation of $S^3$ but depends on the orientation of $K$.
\end{remark}

Now, to fix the ambiguity of the sign in the torsion, and following Turaev's construction of refined torsions~\cite{Turaev:2002}, we need to define an homological orientation. The knot exterior $M_K$ is equipped with a distinguished homology orientation denoted $\mathfrak{o}$ (see~\cite{Turaev:2002}) given by 
$$
 H_0(M_K;\IR) = \IR \bbrack{pt},\;
 H_1(M_K;\IR) = \IR \bbrack{\mu},\; 
 H_i(M_K;\IR) = 0 \, (i \mygeq 2).
$$
Here $\bbrack{pt}$ denotes the class of
a point and $\bbrack{\mu}$ denotes the class of the meridian $\mu$ of $K$. 

Let $\rho$ be a regular representation of $G_K$, in that case $T_{\rho} X(M_K)$ and $H^1_\rho(M_K)$ are isomorphic (see Theorem~\ref{thm:reg}), explicitly the isomorphism $\varphi_\rho\colon T_{\rho} X(M_K) \to H^1_\rho(M_K)$ is induced by (see~\cite[Paragraphe 3.1.3]{Porti:1997}) :
\begin{equation}\label{InclusionNaturelle}
\mathrm{T}_{\rho}{R(G_K)} \longrightarrow Z^1_\rho(G_K),\: {\left.\frac{d{\rho_t}}{dt}\right\vert}_{t=0} 
					\longmapsto \begin{cases}
   G_K \to \su   & \text{ } \\
   g \mapsto \left.\frac{d}{dt}{\rho_t(g)\rho(g^{-1})}\right\vert_{t=0}   & \text{}
\end{cases}
\end{equation}
with $\rho_0 = \rho$.

 The torsion form is the $1$-form defined by:
\[
\tau^K_\rho\colon T_{\rho} X(M_K) \to \CC, \quad \tau^K_\rho(v_\rho) = \begin{cases}
      \mathrm{Tor}(M_K; \mathfrak{su}(2)_\rho; \{\varphi_\rho(v_\rho), h^{(2)}_\rho\}; \mathfrak{o}) & \text{ if } v_\rho \ne 0 \\
      0 & \text{ if } v_\rho = 0
\end{cases}
\]
In this way, we have defined a volume form $\tau^K$ on the one--dimensional manifold $\mathrm{Reg}(K)$.
 
 Here is some remarks concerning the definition of the volume form $\tau^K$.
 
 \begin{remark}
 	For a regular representation $\rho\colon G_K \to \SU$, there exists a unique $P_\rho \in S^2$ and a unique $\bar{P}_\rho \in S^2$ such that:
$$\rho(\mu) = \cos(\theta) + \sin(\theta) P_\rho \text{ and } \rho(\mu) = \cos(2\pi-\theta) + \sin(2\pi-\theta) \bar{P}_\rho$$  with $\theta \in (0, \pi)$.
Changing $P_\rho$ into $\bar{P}_\rho$ in the construction has for consequence to change the volume form $\tau^K_\rho$ into $-\tau^K_\rho$.
 \end{remark}
 
 \begin{remark}
 The $1$-form $\tau^K$ does  not depend on the orientation of $K$.
  \end{remark}
  
   \begin{remark}
 In general $\mathrm{Reg}(K)$ is not compact, the problem of integration on the character variety and with respect to the torsion form $\tau^K$ is not easy. It has been considered recently in~\cite{FK:2011}.
  \end{remark}

\section{Mutation and mutant representations} 
\label{S:mutation}

\subsection{Review on mutation of knots}
Let $K \subset S^3$ be a knot. We let $F$ be a $4$-punctured
$2$-sphere which is incompressible in $M_K$ and whose closure in $S^3$
is a embedded sphere cutting $K$ transversally into four points. Such
a surface is called a \emph{mutation sphere}. We adopt the following
notation. The 3-sphere splits along $S^2$ into two 3-balls : 
$S^3 = B_1 \cup_{S^2} B_2$. Let $M_i = B_i \cap M_K$ and write 
$K_i = B_i \cap K$ for $i = 1, 2$. We have $K = K_1 \cup K_2$ and 
$M_K = M_1 \cup_{\mathrm{id}} M_2$ where $\mathrm{id} \colon F \to F$ is the
identity map. 
Note that each $K_i$ consists of two arcs.

The surface $F$ admits some orientation preserved involutions, we
consider the three $\pi$-angle rotations of $S^2$ that leave the four
points $K \cap S^2$ invariant. Let $\tau$ be such a rotation (see
Fig.~\ref{Fig:F}). The mutant knot $K^\tau$ is the knot $K_1 \cup_\tau K_2$ 
obtained by cutting $K$ along $F \cap K$ and gluing again after
the application of $\tau$. On Fig.~\ref{Fig:KT} one can find the
example of the Kinoshita-Terasaka knot $K_{KT}$ and its mutant the
Conway knot $K_C$.
 
The following diagram of natural inclusions is commutative:
\[
\xymatrix@R=1pt{&M_1 \ar[dr]^-{j_1} &\\
  F \ar[ur]^-{i_1}\ar[dr]_-{i_2}& & M_K\\
  & M_2 \ar[ur]_-{j_2} &}
\]
and the fundamental group of $F$, which is a free group of rank 3,
admits the following presentation:
\[
\pi_1(F) = \langle a,b,c,d \; |\; abcd = 1 \rangle \simeq
\mathbf{F}_3.
\]
 
Given a mutation sphere $(F, \tau)$, there is a fixed point of the
rotation $\tau$. In what follows, we chose this fixed point as the
base point of the fundamental groups: $\pi_1(F)$, $\pi_1(M_1)$,
$\pi_1(M_2)$ and $G_K = \pi_1(M_K)$.  Thus, using the Seifert--Van
Kampen Theorem, we get a decomposition for the group of $K$:
$$G_K \simeq \pi_1(M_1) \mathbf{*}_{\pi_1(F)} \pi_1(M_2).$$ 
Of course we get a similar decomposition for the group of the mutant
knot $K^\tau$:
$$G_{K^\tau} \simeq \pi_1(M_1) \mathbf{*}_{\tau_*(\pi_1(F))} \pi_1(M_2).$$ 
In particular, one can think of the representation space $R(G_K; \SU)$
as a subspace in $R(M_1; \SU) \times R(M_2; \SU)$ and the inclusion is
simply given by the restrictions to $\pi_1(M_1)$ and $\pi_1(M_2)$
(see~\cite{Tillmann2000}).
  
\begin{figure}[!hbt]
  \begin{center}
    \includegraphics[scale=1]{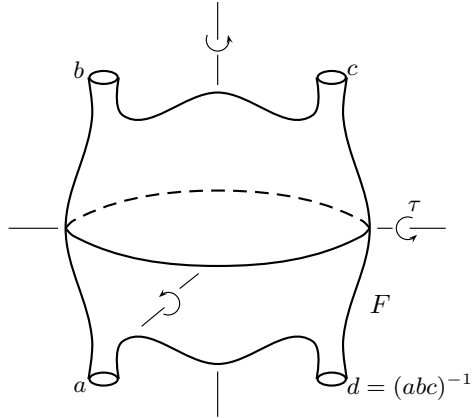}
    \caption{The punctured sphere $F$ and the rotations $\tau$.}
    \label{Fig:F}
  \end{center}
\end{figure}

\subsection{Positive and negative mutations}

Fix an orientation of the knot $K$. The orientation of $K$
induces an orientation for its two parts $K_i = B_i \cap K$ ($i=1,2$).  Moreover, the orientation of $K$ induces an orientation for its mutant $K^\tau = K_1 \cup_\tau K_2$ defined using the
orientation of the unchanged part $K_2$ of the knot. Each meridian $\mu$
of $K$ is oriented by the rule: $\ell k(\mu, K) = +1$.  All the curves
$\tilde{a}, \tilde{b}, \tilde{c}, \tilde{d}$ respectively
corresponding to the generators $a,b,c,d$ of $\pi_1(F)$ are oriented
using the same rule: $\ell k(\gamma, K) = +1$ where $\gamma
\in\{\tilde{a}, \tilde{b}, \tilde{c}, \tilde{d}\}$. Moreover observe
that necessarily the curves $\tilde{a}, \tilde{b}, \tilde{c},
\tilde{d}$ are coupled in two pairs where the curves in the same pair
belongs to the same component of $K_i$ ($i = 1, 2$).

We assign to each curve $\tilde{a},
\tilde{b}, \tilde{c}, \tilde{d}$ in $F$ a sign $\pm$ as follows.  Let
$\gamma \in\{\tilde{a}, \tilde{b}, \tilde{c}, \tilde{d}\}$, when
passing through $\gamma$ along the oriented knot $K$ if we go from
$M_1$ to $M_2$, then we assign $+$ to the curve $\gamma$, if not we
assign $-$. Of course this convention depends on the orientation of
the knot, if we reversed the orientation of $K$, then all signs
change. Moreover, two of the four curves are assign with $+$ and the
two other with $-$. Observe that if we consider a pair of curves which
lie on the same component of $K_i$, then necessarily one is assign
with $+$ and the other with $-$. A mutation $\tau$ sends the set of sign--oriented curves $\{\tilde{a}, \tilde{b}, \tilde{c}, \tilde{d}\}$ to itself (but the sign of the curves could be changed in the mutation). We say that the mutation $\tau$ is \emph{positive} if $\tau$ preserves signs, which means that for all $\gamma \in \{\tilde{a},\tilde{b}, \tilde{c}, \tilde{d}\}$, the curves $\gamma$ and $\tau(\gamma)$ in $F$ are assigned with the same sign. If not, we say that the mutation is \emph{negative}.

Observe that there exist only \emph{one}
positive mutation among the three possible ones and that this notion
does not depend on the orientation of $K$. We say that $K^\tau$ is a
\emph{positive mutant} (resp. \emph{negative mutant}) of $K$, if the mutation $\tau$ is positive (resp. negative). As
an example, the Conway knot $K_C$ is a positive mutant of the
Kinoshita--Terasaka knot $K_{KT}$ (see Fig.~\ref{Fig:KT}).

In what follows, we choose a common meridian for $K$ and $K^\tau$.
The meridian $\mu$ of $K$ is chosen to be a circle $\bord D^2 \times \{pt\}$ in 
$\bord M_2 = F \cup \bord D^2 \times I \cup \bord D^2 \times I$.  
The meridian of $K^\tau$ is chosen as same as $K$, and denoted by $\mu^\tau$.
Moreover we endow $M_K$ (resp. $M_{K^\tau}$) with the usual homological
orientations defined by the meridian $\mu$ (resp. $\mu^\tau$). 

\begin{figure}[!bht]
  \begin{center}
    \includegraphics[scale=1.1]{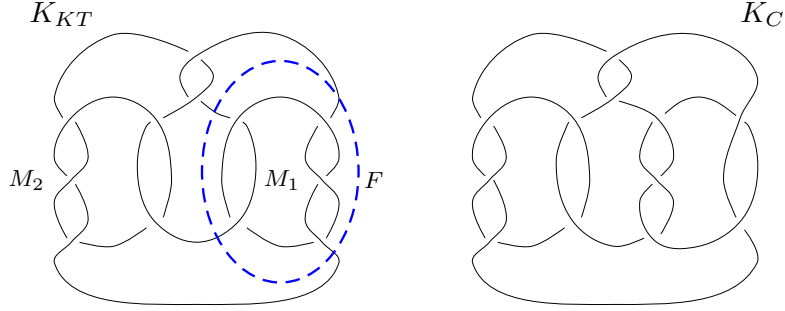}
    \caption{The Kinoshita-Terasaka knot $K_{KT}$ and its mutant the
      Conway knot $K_C$.}
    \label{Fig:KT}
  \end{center}
\end{figure}

\subsection{Some homology computations} 
\label{section:homology_computation}

The aim of this paragraph is to give some observations on
the (twisted and non--twisted) homology groups of the mutation sphere $F$ and on the manifolds
$M_i$ ($i = 1, 2$).

\begin{proposition}\label{prop:homology_F}
  The homology groups with real coefficients of the punctured
  $2$-sphere $F$ are described as follows:
  $$
  H_j(F;\IR) \simeq
  \begin{cases}
    \IR & \text{ if } j = 0, \\
    \IR^3 & \text{ if } j = 1, \\
    0 & \text{otherwise}.
  \end{cases}
  $$
\end{proposition}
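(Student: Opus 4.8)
The plan is to exploit the homotopy type of $F$ rather than to set up an explicit chain complex. The surface $F$ is a $2$-sphere with four open disks removed, hence a compact connected surface with four boundary circles (equivalently, the $4$-punctured sphere). Any connected surface with nonempty boundary deformation retracts onto a $1$-dimensional spine, so $F$ has the homotopy type of a finite connected graph, and in particular of a wedge of circles. Since the excerpt already records that $\pi_1(F) \cong \mathbf{F}_3$ is free of rank $3$, and a finite connected graph with free fundamental group of rank $3$ is homotopy equivalent to a bouquet of three circles, I would conclude
\[
F \simeq \bigvee_{i=1}^{3} S^1.
\]

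From here the homology is immediate. A wedge of three circles has $H_0(F;\IR) = \IR$ (it is path connected), $H_1(F;\IR) = \IR^3$ (the three loops furnish a basis), and $H_j(F;\IR) = 0$ for $j \geq 2$ (the space is at most $1$-dimensional up to homotopy). This reproduces exactly the claimed list.

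Should one prefer to avoid invoking the homotopy type explicitly, I would argue numerically instead. First, $H_0(F;\IR) = \IR$ since $F$ is connected, and $H_j(F;\IR) = 0$ for $j \geq 2$ because $F$ retracts onto a $1$-complex (equivalently, $F$ is a compact surface with nonempty boundary and so carries no fundamental class). The rank of $H_1$ is then forced by the Euler characteristic: since each removed disk lowers the Euler characteristic by $1$, one has $\chi(F) = \chi(S^2) - 4 = -2$, and $\chi(F) = \dim H_0 - \dim H_1 = 1 - \dim H_1$ yields $\dim H_1(F;\IR) = 3$.

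There is essentially no serious obstacle in this computation; the only point deserving a word of care is the vanishing $H_2(F;\IR) = 0$, which must be justified by the bounded (noncompact) nature of $F$ — a \emph{closed} orientable surface would contribute an $\IR$ in degree $2$ — rather than passed over silently. Once that point is secured, either route delivers the stated isomorphisms.
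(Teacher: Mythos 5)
Your argument is correct and follows essentially the same route as the paper, which likewise observes that the $4$-punctured sphere is homotopy equivalent to a bouquet of three circles and reads off the homology from that. The extra justification you supply (retraction to a spine, the Euler characteristic cross-check) is sound but not a different method.
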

\begin{proof}
  The mutation sphere $F$ is a $4$-punctured $2$-sphere, thus $F$ is
  homotopic to a bouquet of three circles, hence the homology groups
  $H_*(F;\IR)$ are the homology groups of a bouquet.
\end{proof}

\begin{remark}
  More precisely, one can observe that (see Fig.~\ref{Fig:F} for
  notation):
  \[
  H_1(F;\IR) \simeq \IR a \oplus \IR b \oplus \IR c \oplus \IR d /
  (a+b+c+d = 0).
  \]
\end{remark}

\begin{proposition}\label{prop:homology_M_i}
  The homology groups with real coefficients of the manifold $M_i$
  ($i=1,2$) are described as follows:
   $$
   H_j(M_i; \IR) \simeq
   \begin{cases}
     \IR  & j=0,\\
     \IR^2 & j = 1, \\
     0 & {\text otherwise.}
   \end{cases}
   $$
\end{proposition}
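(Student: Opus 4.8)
The plan is to compute $H_*(M_i;\IR)$ directly from a Mayer--Vietoris argument, using the description of $M_i = B_i \cap M_K$ as the complement, inside the $3$-ball $B_i$, of a regular neighborhood of the two arcs $K_i = B_i \cap K$. The point is that the answer depends only on the local structure of these arcs and not at all on how they are knotted or linked inside $B_i$, so no hypothesis beyond ``each $K_i$ consists of two properly embedded arcs'' (recorded in Section~\ref{S:mutation}) is needed.

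First I would fix the three pieces of the decomposition $B_i = M_i \cup V_i$, where $V_i = B_i \cap V(K)$ is a regular neighborhood of the two arcs. Since a tubular neighborhood of a properly embedded arc is a product $D^2 \times I$ whose two disk ends lie on $\partial B_i$, the neighborhood $V_i$ is a disjoint union of two $3$-balls, so $V_i \simeq \mathrm{pt} \sqcup \mathrm{pt}$. The overlap $M_i \cap V_i$ is the lateral boundary of $V_i$, i.e.\ the two annuli $\partial D^2 \times I = S^1 \times I$, so $M_i \cap V_i \simeq S^1 \sqcup S^1$. Finally $B_i$ is a $3$-ball and hence contractible.

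Next I would feed these homotopy types into the Mayer--Vietoris long exact sequence in ordinary (untwisted) real homology for the decomposition $B_i = M_i \cup V_i$:
\[
\cdots \to H_n(M_i \cap V_i) \to H_n(M_i) \oplus H_n(V_i) \to H_n(B_i) \to H_{n-1}(M_i \cap V_i) \to \cdots
\]
Two elementary remarks settle the low degrees: as the complement of a $1$-complex in the connected $B_i$, the manifold $M_i$ is connected, so $H_0(M_i) = \IR$; and as a compact $3$-manifold with non-empty boundary it satisfies $H_3(M_i) = 0$. In degree $2$ the sequence reads $0 = H_2(M_i \cap V_i) \to H_2(M_i) \oplus 0 \to H_2(B_i) = 0$, forcing $H_2(M_i) = 0$. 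In degree $1$ the segment $0 = H_2(B_i) \to H_1(M_i \cap V_i) \to H_1(M_i) \oplus H_1(V_i) \to H_1(B_i) = 0$ becomes $0 \to \IR^2 \to H_1(M_i) \to 0$, whence $H_1(M_i) \simeq \IR^2$. This is exactly the claimed answer.

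The computation is immediate once the three homotopy types are correctly identified, so the only step demanding any care is the claim that $V_i$ splits as two balls and that $M_i \cap V_i$ is two annuli; this is precisely the fact that each $K_i$ is a pair of arcs, and it is robust against knotting of the tangle, which is why Mayer--Vietoris is preferable to a naive ``$M_i$ retracts onto $S^1 \vee S^1$'' claim (true only for the trivial tangle). As a consistency check one may note $\chi(M_i) = 1 - 2 + 0 - 0 = -1$, matching $\tfrac{1}{2}\chi(\partial M_i) = -1$, where $\partial M_i = F \cup (\text{two tubes})$ is the genus-$2$ surface obtained by capping the four-holed sphere $F$ with the two lateral annuli.
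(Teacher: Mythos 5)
Your proposal is correct and follows essentially the same route as the paper, which also deduces the result from the Mayer--Vietoris sequence for the decomposition $B_i = M_i \cup (D^2 \times I \sqcup D^2 \times I)$; you have simply filled in the identification of the pieces and the low-degree bookkeeping that the paper leaves implicit. Nothing further is needed.
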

\begin{proof}
  It follows from the Mayer--Vietoris sequence for 
  the decomposition of 3-ball 
  $B^3_i = M_i \cup (D^2 \times I \cup D^2 \times I)$.
\end{proof}
\begin{remark}
  The knot $K$ is cut as four arcs and each $3$-ball $B_i$ contains
  two arcs denoted by $K_i$.  We can choose a pair of meridians for
  two arcs in $B_i$ as a basis of $H_1(M_i;\IR)$ and 
  denote by $\xi_i$ and $\eta_i$ the homology classes.
\end{remark}

 The Mayer--Vietoris sequence $\mathcal{V}_\IR$ with real coefficients associated
 to the splitting $M_K = M_{1} \cup_{\mathrm{id}} M_{2}$ is:
 \[
 \xymatrix@R=3pt{
   0 \ar[r] & 
     H_{1}(F;\IR) \ar[r]^-{(i^1_{*}, i^2_{*})} & 
     H_{1}(M_1;\IR) \oplus H_1(M_2;\IR) \ar[r]^-{j^1_* - j^2_*} & 
     H_1(M_K;\IR) & \\
   \ar[r]^-\delta & 
     H_0(F;\IR) \ar[r] & 
     H_{0}(M_1;\IR) \oplus H_0(M_2;\IR) \ar[r] & 
     H_0(M_K;\IR) \ar[r] & 
   0.}
\]

By counting dimensions, the connecting homomorphism $\delta$ is zero.
The first homology group $H_1(M_K; \IR)$ is generated by the meridian ${\mu}$.  Moreover, one has
the following properties on the connecting maps into $\mathcal{V}_\IR$.

\begin{lemma}\label{lemma:varphi_i}
  Let $\xi_i$ and $\eta_i$ denote meridians of $K_i$ such that
  they give a basis of $H_1(M_i;\IR)$ for $i=1, 2$.  
  In the Mayer--Vietoris
  sequence $\mathcal{V}_\IR$, the homomorphims $j^1_*$ and $j^2_*$ satisfy
  the following identities: 
  $$ j^1_*(\xi_1) = j^1_*(\eta_1) = 
   j^2_*(\xi_2) = j^2_*(\eta_2) = \bbrack{\mu} \text{ in } H_1(M_K;\IR),$$
   where $\mu$ denotes an oriented meridian of $K$.
\end{lemma}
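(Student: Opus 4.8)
The plan is to reduce everything to the fact that $H_1(M_K;\IR)$ is one-dimensional and that \emph{every} meridian of $K$ represents the same class in it. Recall that the abelianization of $G_K$ is infinite cyclic: in a Wirtinger presentation every generator is conjugate to the meridian $\mu$, so all meridians of $K$ become equal in $H_1(M_K;\IR)=\IR\bbrack{\mu}$ after abelianizing. Equivalently, the isomorphism $H_1(M_K;\IR)\cong\IR$ is realized by the linking-number homomorphism $\gamma\mapsto \ell k(\gamma,K)$, under which $\bbrack{\mu}\mapsto +1$ by the normalization $\ell k(\mu,K)=+1$.

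First I would pin down the geometric nature of $\xi_i$ and $\eta_i$. By the remark preceding the statement, each of them is the class of a meridian of one of the two arcs making up $K_i\subset K$, that is, a small circle in $M_i$ bounding a disk in $B_i$ meeting $K$ transversally in a single point of that arc. Under the inclusion $j_i\colon M_i\hookrightarrow M_K$ this circle bounds the very same disk, which still meets $K$ in exactly that one point; hence $j_i$ carries it to an honest meridian of the whole knot $K$.

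Next I would read off the linking number. Since the spanning disk meets $K$ in a single transverse point, one gets $\ell k(j_i(\xi_i),K)=\pm1$, and the orientation rule $\ell k(\gamma,K)=+1$ imposed on all four curves $\tilde a,\tilde b,\tilde c,\tilde d$ forces the value $+1$; the identical computation applies to $\eta_i$. Composing with the linking-number isomorphism, the four classes $j^1_*(\xi_1)$, $j^1_*(\eta_1)$, $j^2_*(\xi_2)$, $j^2_*(\eta_2)$ are all sent to $+1\in\IR$, so they coincide and equal $\bbrack{\mu}$.

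The only point demanding real care — and the main potential obstacle — is the orientation bookkeeping: I must check that orienting each $\xi_i,\eta_i$ by the rule $\ell k(\cdot,K)=+1$ is consistent with the orientation of $\mu$, normalized the same way, so that no sign is lost when passing to $H_1(M_K;\IR)$ and the coincidence is genuinely $+\bbrack{\mu}$ rather than $-\bbrack{\mu}$. Once all meridians are normalized by this single linking rule, the equality of the four images with $\bbrack{\mu}$ is forced by the one-dimensionality of $H_1(M_K;\IR)$ together with the non-degeneracy of the linking pairing.
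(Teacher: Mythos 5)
Your argument is correct, but it follows a genuinely different route from the paper's. You identify $H_1(M_K;\IR)\cong\IR$ via the linking number with $K$, note that each $\xi_i,\eta_i$ is a meridian of an arc of $K$ and hence bounds a disk in $B_i$ meeting $K$ transversally in a single point, and conclude that all four images have linking number $+1$ once the curves are oriented by the rule $\ell k(\cdot,K)=+1$; the one issue you flag --- that the statement needs this orientation convention to hold with the sign $+\bbrack{\mu}$ --- is real, and the paper does impose exactly this convention on the curves $\tilde a,\tilde b,\tilde c,\tilde d$ in the preceding subsection, so your proof closes. The paper instead argues inside the Mayer--Vietoris sequence itself: each annulus component of $\bord M_i\setminus F$ identifies a pair of boundary circles of $F$ with the meridians $\xi_i,\eta_i$, so elements such as $(\xi_1,\xi_2)$ and $(\xi_1,\eta_2)$ lie in the image of $(i^1_*,i^2_*)=\ker(j^1_*-j^2_*)$, which forces $j^1_*(\xi_1)=j^2_*(\xi_2)=j^2_*(\eta_2)=j^1_*(\eta_1)$; surjectivity of $j^1_*-j^2_*$ then shows the common value is a generator, and the choice of $\mu$ as a circle $\bord D^2\times\{pt\}$ in $\bord M_2$ identifies it with $\bbrack{\mu}$. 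Your route is more elementary and handles the normalization to $+\bbrack{\mu}$ more transparently; the paper's route has the advantage of extracting the pairwise relations $j^1_*(\xi_1)=j^2_*(\xi_2)$ directly from exactness of $\mathcal{V}_\IR$, which is precisely what gets reused (with a sign change) in Remark~\ref{remark:compare_homology_under_mutation} when the mutation is negative.
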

\begin{proof}[Proof of the Lemma]
  The boundary of $M_i$ consists of a four punctured sphere $F$ and
  two annuli.  An annulus in $M_1$ is connected with two annuli in
  $M_2$ in $M_K$, thus joining these four annuli alternately, we obtain
  the boundary torus of $M_K$.  When an annulus in $\bord M_1$ has the
  boundary $a \cup (-b)$ on $F$ and contains the meridian $\xi_1$,
  the map $(i^1_*, i^2_*)$ sends $a$ and $b$ to $(\xi_1, \xi_2)$
  and $(\xi_1, \eta_2)$.  Such elements are contained in the
  kernel $j^1_* - j^2_*$.  Hence we have
  $j^1_*(\xi_1)=j^2_*(\xi_2) = j^2_*(\eta_2) = j^1_*(\eta_1)$.
  It follows from the surjectivity of $j^1_* - j^2_*$ that all
  $\xi_i$ and $\eta_i$ are send to the meridian $\mu$.
\end{proof}

\begin{remark}\label{remark:compare_homology_under_mutation}
  For a positive mutation $\tau$, Lemma~\ref{lemma:varphi_i}
  also holds for the Mayer-Vietoris sequence associated to the splitting $M_{K^\tau} = M_1 \cup_{\tau} M_2$.
  For a negative mutation $\tau$, it holds that $j^k_* (\xi_k) =
  j^k_*(\eta_k)$ for each $k=1, 2$.  But $j^1_*(\xi_1)$ has a
  different sign than the one of  $j^2_*(\xi_2)$.  
  \end{remark}

\subsection{Mutant representation}

For any representation $\rho \colon G_K \to \SU$, its restriction
$\rho_F \colon \pi_1(F) \to \SU$ to $\pi_1(F)$ is such that
$\chi_{\rho_F}(a) = \chi_{\rho_F}(b) = \chi_{\rho_F}(c) = \chi_{\rho_F}(d)$ (see Fig.~\ref{Fig:F}). 
We say that $\rho$ is \emph{$F$-irreducible} if its
restriction $\rho_F$ is irreducible. 

The following result computes the twisted homology groups of $F$ and $M_i$.
\begin{lemma}\label{lemma:twisted_homology_F_and_M_i}
If  $\rho\colon G_K \to \SU$ is an $F$-irreducible representation of $G_K$, then we have:
  \[
  \dim_{\CC} H^j_{\rho_F}(F) = 
  \begin{cases}
    6  & \text{ if } j = 1, \\
    0 & \text{otherwise}
  \end{cases}\] and
  \[
  \dim_{\CC} H^1_{\rho_i}(M_i) \geqslant 3.
  \]
\end{lemma}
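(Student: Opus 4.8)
The plan is to read off both dimension counts from a single bookkeeping identity: for any finite CW--complex $W$ and representation $\rho$, the Euler characteristic of the twisted cohomology equals the ordinary Euler characteristic times the rank of the coefficient system. Since $\su_\rho$ is a local system of rank $3$, this reads $\sum_{j}(-1)^j \dim H^j_\rho(W) = 3\,\chi(W)$. Feeding into it the vanishing of $H^0$ in the irreducible case and the vanishing of cohomology above the homotopical dimension will pin down the remaining groups.

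First I would treat the surface $F$. Being a $4$-punctured $2$-sphere, $F$ is homotopy equivalent to a wedge of three circles (Proposition~\ref{prop:homology_F}); consequently $H^j_{\rho_F}(F) = 0$ for $j \geq 2$ and $\chi(F) = -2$. As $\rho$ is $F$-irreducible, $\rho_F$ is irreducible, so $H^0_{\rho_F}(F) = \su^{Ad\circ\rho_F(\pi_1(F))} = 0$. The identity then collapses to $-\dim H^1_{\rho_F}(F) = 3\cdot(-2) = -6$, giving $\dim H^1_{\rho_F}(F) = 6$ and settling the first claim.

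For the pieces $M_i$ I would first promote $F$-irreducibility to irreducibility of each restriction $\rho_i = \rho|_{\pi_1(M_i)}$: the natural inclusion $F \hookrightarrow M_i$ induces $\pi_1(F)\to \pi_1(M_i)$, through which $\rho_F$ factors, so $\rho_i(\pi_1(M_i))$ contains the irreducible subgroup $\rho_F(\pi_1(F))$ and is itself irreducible; hence $H^0_{\rho_i}(M_i) = 0$. Next, since $M_i$ is a compact orientable $3$-manifold with nonempty boundary it is homotopy equivalent to a $2$-complex (equivalently, Poincar\'e--Lefschetz duality gives $H^3_{\rho_i}(M_i)\cong H_0(M_i,\bord M_i;\su_{\rho_i}) = 0$), so $H^j_{\rho_i}(M_i) = 0$ for $j \geq 3$. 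Reading $\chi(M_i) = 1 - 2 = -1$ off Proposition~\ref{prop:homology_M_i}, the identity becomes $-\dim H^1_{\rho_i}(M_i) + \dim H^2_{\rho_i}(M_i) = -3$, whence $\dim H^1_{\rho_i}(M_i) = 3 + \dim H^2_{\rho_i}(M_i) \geq 3$.

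The only steps I expect to require genuine care, beyond the routine Euler characteristic arithmetic, are the two non-formal inputs: promoting $F$-irreducibility to irreducibility of the $\rho_i$, which is exactly what forces $H^0_{\rho_i}(M_i)=0$, and explaining why the statement is only an inequality. The latter is transparent from the count: the gap equals $\dim H^2_{\rho_i}(M_i)$, which vanishes — yielding the equality $\dim H^1_{\rho_i}(M_i)=3$ — precisely when $M_i$ is a handlebody (the tangle $K_i$ trivial), but which may survive for a knotted tangle.
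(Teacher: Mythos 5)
Your proof is correct and follows essentially the same route as the paper: the Euler characteristic identity for the rank--$3$ local system, vanishing of $H^0$ after promoting $F$-irreducibility to irreducibility of $\rho_F$ and of each $\rho_i$, and vanishing above the homotopical dimension, giving $\dim H^1_{\rho_F}(F)=6$ and $\dim H^1_{\rho_i}(M_i)=3+\dim H^2_{\rho_i}(M_i)\geqslant 3$. (The paper additionally routes through the long exact sequence of the pair $(M_i,\partial M_i)$ and Poincar\'e duality, which your Euler-characteristic count makes unnecessary; only your closing aside --- that the gap $\dim H^2_{\rho_i}(M_i)$ vanishes \emph{precisely} when $M_i$ is a handlebody --- overreaches, since the paper later shows this group vanishes for every regular $F$-irreducible $\rho$ with no handlebody hypothesis.)
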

\begin{proof}%[Sketch of the proof]
\hfill\hphantom{}
  \begin{enumerate}
  \item The punctured sphere $F$ has the same homotopy type as a
    bouquet of $3$ circles, thus it has the same homotopy type as a
    $1$-dimensional CW-complex and its Euler characteristic is
    $-2$. Hence using the irreducibility of $\rho_F$, we conclude that all its
    twisted homology groups vanish except in degree $1$ for which:
    \[
    \dim_{\CC} H^1_{\rho_F}(F) = 
      - \chi(F) \cdot \dim_{\CC} \sll = 6.
    \]
  \item The boundary of the three--dimensional manifold $M_i$, $i=1,2$, is a surface of genus two, thus $\chi(M_i) = -1$. Moreover, as the representation $\varphi = {(\rho_i)}_{|\pi_1(F)}$ is irreducible, we observe that $\rho_i : \pi_1(M_i) \to \SU$ are also irreducible for $i=1,2$. The long exact sequence in twisted cohomology with coefficients in $Ad \circ \rho_i$ associated to the pair $(M_i, \bord M_i)$ reduces to:
 \[
 \xymatrix@-.5pc{0 \ar[r] & H^1_{\rho_i}(M_i, \bord M_i) \ar[r] & H^1_{\rho_i}(M_i) \ar[r]^-{i^*} & H^1_{\rho_i}(\bord M_i) \\
 &\textcolor{white}{H^1_{\rho_i}(M_i)}\ar[r]  &H^2_{\rho_i}(M_i, \bord M_i) \ar[r] & H^2_{\rho_i}(\bord M_i) \ar[r] & 0.}
 \]
 Observe that $\dim H^1_{\rho_i}(M_i) = 3 + \dim H^2_{\rho_i}(M_i)$ and using Poincar\'e duality, we obtain $\rk i^* = 3$, which implies that $\dim H^1_{\rho_i}(M_i) \geqslant 3$.

    \end{enumerate}
\end{proof}

Moreover, for restrictions to $\pi_1(F)$ of
$F$-irreducible representations we have the following lemma
(see~\cite[Theorem 2.2]{Ruberman} and~\cite[Lemma 2.1.1]{Tillmann2000}).

\begin{lemma}\label{lemma:tau_gievn_Ad}
  If $\psi \colon \pi_1(F) \to \SU$ is an irreducible representation
  such that $\chi_\psi(a) = \chi_\psi(b) = \chi_\psi(c) =  \chi_\psi(d)$,
  then there is an element $x \in \SU$ such that:
  \begin{equation}\label{eqn:psi}
    \psi \circ \tau_* = Ad_x \circ \psi.
  \end{equation}
\end{lemma}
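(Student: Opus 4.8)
The plan is to prove that $\psi$ and $\psi \circ \tau_*$ have the same $\SL$-character and then to appeal to rigidity of irreducible representations. Since $\tau_*$ is an automorphism of $\pi_1(F)$, the composite $\psi\circ\tau_*$ is again irreducible, and two irreducible representations into $\SL$ with the same character are conjugate in $\SL$; so it suffices to establish $\chi_{\psi\circ\tau_*}=\chi_\psi$. Granting this, choose $M\in\SL$ with $\psi\circ\tau_*=Ad_M\circ\psi$, i.e. $\psi(\tau_* g)=M\psi(g)M^{-1}$ for all $g$. Since both sides are unitary and $\psi(g)^*=\psi(g)^{-1}$, a short manipulation shows that $M^*M$ commutes with the whole (irreducible) image of $\psi$; by Schur's lemma $M^*M=\lambda\,\mathbf{1}$ with $\lambda>0$, and taking determinants (using $\det M=1$) forces $\lambda=1$. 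Hence $M$ is already unitary, $M\in\SU$, and we take $x=M$.

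To compare characters, recall the Fricke description: the $\SL$-character ring of $\pi_1(F)=\langle a,b,c,d\mid abcd=1\rangle\cong\mathbf{F}_3$ is generated by the seven trace functions $\tr(a),\tr(b),\tr(c),\tr(ab),\tr(bc),\tr(ca)$ and $\tr(abc)$. Four of these are peripheral: $\tr(a),\tr(b),\tr(c)$, and $\tr(abc)=\tr(d^{-1})=\tr(d)$ by the relation $abcd=1$. The automorphism $\tau_*$, induced by the $\pi$-rotation based at its fixed point, permutes the four peripheral conjugacy classes by a double transposition of the punctures, so under the hypothesis $\chi_\psi(a)=\chi_\psi(b)=\chi_\psi(c)=\chi_\psi(d)$ it preserves each peripheral trace. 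The remaining functions $\tr(ab),\tr(bc),\tr(ca)$ are the traces of the three interior simple closed curves of $F$, one for each partition of the four punctures into two pairs; these are the three $2{+}2$ partitions $\{ab\mid cd\},\{bc\mid da\},\{ca\mid db\}$. The Klein four-group generated by the three rotations is exactly the kernel of the action of $S_4$ on these partitions, so $\tau_*$ fixes each partition and therefore sends each interior curve to itself up to isotopy and reversal of orientation. As the trace is a class function invariant under $g\mapsto g^{-1}$, each interior trace is preserved. Thus all seven generators agree on $\psi$ and $\psi\circ\tau_*$, giving $\chi_{\psi\circ\tau_*}=\chi_\psi$.

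The heart of the matter is the geometric input that $\tau_*$ fixes each interior curve up to free homotopy; I would make this precise through the identification of the three interior curves with the three $2{+}2$ partitions together with the fact that the rotations constitute $\ker(S_4\to S_3)$, which is exactly the content invoked from \cite[Thm 2.2]{Ruberman} and \cite[Lem 2.1.1]{Tillmann2000}. I expect the bookkeeping of base points and curve orientations to be the only genuinely fiddly step, and it is harmless here precisely because every trace function involved is a conjugation- and inversion-invariant class function. Finally, the equality of all four peripheral traces is essential and cannot be dropped: without it the double transposition would merely permute the peripheral traces rather than fix the character.
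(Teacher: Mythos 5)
Your proof is correct and is essentially the argument the paper relies on: the paper offers no proof of its own, deferring to \cite[Theorem 2.2]{Ruberman} and \cite[Lemma 2.1.1]{Tillmann2000}, and your trace-coordinate computation (the seven Fricke generators of the $\mathrm{SL}_2$-character ring of $\mathbf{F}_3$, the Klein four-group acting trivially on the three $2{+}2$ partitions, hence on the interior curves up to isotopy and inversion) is exactly the content of those citations. The final upgrade from an $\SL$-conjugator to one in $\SU$ via $M^*M=\lambda\,\mathbf{1}$ and Schur's lemma is also correct and is the one detail not explicit in the cited sources.
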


\begin{remark}
  The element $x$ in Equation~(\ref{eqn:psi}) is not unique in general. 
  Actually, by Schur's lemma, $x$ is defined up to sign.
\end{remark}

The rest of this section consists in the construction of the so--called mutant representation associated to a representation of $G_K$. It is an $\SU$-representation of $G_{K^\tau}$
corresponding to a representation of $G_K$ obtained by twisting its restrictions to $R(M_1;\SU)$ and $R(M_2;\SU)$, 
using the pull--back of $\tau_*$.  Note that the pull--back
of $\tau_*$ is defined only on $R(F;\SU)$.  
However, Lemma~\ref{lemma:tau_gievn_Ad} says 
that the pull--back of $\tau_*$ is expressed as the adjoint action, so
we can use this adjoint action as the twisting on $R(M_1;\SU)$ instead
of the pull--back of $\tau_*$. 
 
Let $\rho \colon G_K \to \SU$ be an $F$-irreducible representation. 
The \emph{mutant representation} $\rho^\tau\colon G_{K^\tau} \to \SU$
associated to $x$ as in Equation~(\ref{eqn:psi}) is defined by
(see~\cite[Section 2.2]{Tillmann2000}):
\[
  \rho_1^\tau = \rho^\tau_{|\pi_1(M_1)} = 
  Ad_{x^{-1}} \circ \rho_1, \; 
  \rho_2^\tau = \rho_2,
\]
where $\rho_1 = \rho_{|\pi_1(M_1)}$ and $\rho_2 = \rho_{|\pi_1(M_2)}$.
One can observe that this definition is consistent because both parts
agree on the amalgamating subgroup. In that way, we have thus defined a map $\mathfrak{t} \colon \rho \mapsto \rho^\tau$ which only depends upon the inner automorphism defined by $x$.

It is easy to see that $\rho\colon G_K \to \SU$ is an irreducible
representation if and only if $\rho^\tau\colon G_{K^\tau} \to \SU$ is
as well (see for example~\cite{Ruberman} or~\cite{Tillmann2000}).

\begin{remark}
Let $\rho \colon G_K \to \SU$ be an $F$-irreducible representation. It is easy to observe that the restrictions $\rho_F = \rho_{|\pi_1(F)}$ and $\rho^\tau_F = \rho^\tau_{|\pi_1(F)}$ of respectively $\rho$ and $\rho^\tau$ to $\pi_1(F) $ coincide.
\end{remark}

Following ideas developed by D.~Cooper and D.~Long~\cite{CL:1996}, S.~Tillmann proved~\cite{Tillmann2000} that the geometric
components of the character varieties of $G_K$ and $G_{K^\tau}$ are
birationally equivalent. Here we adapt their arguments to the situation of $\SU$-representation spaces.

\begin{theorem}[\cite{CL:1996} Theorem~7.3 and \cite{Tillmann2000} Proposition~2.2.2]
Let $K$ be a knot in $S^3$ and consider a mutation sphere $(F, \tau)$. If $C$ denotes an irreducible component of the character variety $X(M_K)$ which contains at least one character of an $F$-irreducible representation, then $C$ is birationnaly equivalent to an irreducible component of $X(M_{K^\tau})$.
\end{theorem}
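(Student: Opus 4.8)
The plan is to adapt the argument of Cooper--Long~\cite{CL:1996} and Tillmann~\cite{Tillmann2000} to the $\SU$-setting by realising the mutation explicitly in trace coordinates. Recall the amalgamated decompositions $G_K \simeq \pi_1(M_1) *_{\pi_1(F)} \pi_1(M_2)$ and $G_{K^\tau} \simeq \pi_1(M_1) *_{\tau_*(\pi_1(F))} \pi_1(M_2)$, which realise $R(G_K;\SU)$ and $R(G_{K^\tau};\SU)$ as the subspaces of $R(M_1;\SU) \times R(M_2;\SU)$ consisting of pairs $(\rho_1,\rho_2)$ whose restrictions to $\pi_1(F)$ agree (respectively, agree after precomposition with $\tau_*$). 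The mutant representation construction gives a map $\mathfrak{t}\colon \rho \mapsto \rho^\tau$ with $\rho_1^\tau = Ad_{x^{-1}}\circ\rho_1$ and $\rho_2^\tau = \rho_2$, defined wherever $\rho$ is $F$-irreducible. Since replacing $x$ by $-x$ leaves $Ad_{x^{-1}}$ unchanged (Schur's lemma, cf.\ the remark after Lemma~\ref{lemma:tau_gievn_Ad}) and an irreducible representation is determined by its character up to conjugacy, $\mathfrak{t}$ descends to a well-defined assignment $\chi_\rho \mapsto \chi_{\rho^\tau}$ between characters.

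First I would isolate the relevant open set. Being $F$-irreducible is a Zariski-open condition on $X(M_K)$: reducibility of $\rho_{|\pi_1(F)}$ is cut out by the vanishing of commutator-trace polynomials such as $\tr\big(\rho([g,h])\big)-2$ for $g,h\in\pi_1(F)$. By hypothesis $C$ contains at least one $F$-irreducible character, so this open locus meets the irreducible component $C$ in a nonempty, hence Zariski-dense, open subset $U\subset C$, on which $\mathfrak{t}$ is everywhere defined.

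The heart of the matter is to show that $\mathfrak{t}$ is \emph{rational}, that is, that the trace coordinates of $\rho^\tau$ are rational functions of those of $\rho$. The key point is that the element $x$ of Equation~(\ref{eqn:psi}) depends rationally on $\psi=\rho_{|\pi_1(F)}$: choosing two generators of $\pi_1(F)$ whose $\psi$-images do not commute (possible by irreducibility of $\psi$), the defining relations $x\,\psi(\gamma)=\psi(\tau_*\gamma)\,x$ are linear in the entries of $x$ and determine it up to sign, so $x$, and hence $Ad_{x}$, is a rational function of the entries of $\psi$. Now traces of words lying entirely in $\pi_1(M_2)$ are unchanged since $\rho_2^\tau=\rho_2$, and traces of words in $\pi_1(M_1)$ are unchanged because conjugation by $x$ preserves traces; only mixed words are genuinely affected, and their $\rho^\tau$-traces are polynomial in the entries of $x^{\pm1}$ and of $\rho_1,\rho_2$, hence rational in the character of $\rho$. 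As $\SU$-character varieties are generated by trace functions, this yields a rational map $\mathfrak{t}_*\colon C\dashrightarrow X(M_{K^\tau})$ that is regular on $U$.

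It remains to identify the image and invert. Being the rational image of the irreducible set $U$, the Zariski closure of $\mathfrak{t}_*(U)$ is irreducible and therefore contained in a single irreducible component $C'$ of $X(M_{K^\tau})$; moreover $C'$ again contains $F$-irreducible characters, since $\rho^\tau$ is $F$-irreducible whenever $\rho$ is. Because the $\pi$-rotation satisfies $\tau^2=\mathrm{id}$, mutating $K^\tau$ along the same data returns $K$, so the reverse mutation provides a rational map $C'\dashrightarrow C$ inverse to $\mathfrak{t}_*$ on dense open sets; this exhibits the birational equivalence of $C$ and $C'$. I expect the main obstacle to lie in the rationality step: one must solve for $x$ rationally and patch the locally chosen non-commuting pair of generators over all of $U$, and one must confirm that $\mathfrak{t}_*$ is dominant onto $C'$ rather than landing in a proper subvariety. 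Both follow from the explicit, involutive nature of the construction, but they require exactly the bookkeeping that the amalgamated-product structure makes available.
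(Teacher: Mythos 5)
Your proposal follows essentially the same route as the paper: both adapt the Cooper--Long/Tillmann argument by showing that the mutant-representation map $\mathfrak{t}$ descends to a bijection on $F$-irreducible characters (a dense open subset of $C$) and is rational, with a rational inverse supplied by $(K^\tau)^\tau = K$. The only difference of emphasis is that the paper spells out the conjugation-invariance step (if $\varrho = Ad_z\circ\rho$ then $zx=\pm yz$, hence $\varrho^\tau = Ad_z\circ\rho^\tau$), which you compress into an appeal to Schur's lemma, whereas you give more detail on the rationality of $x$ and of the trace coordinates, which the paper simply cites to Tillmann.
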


Let us review the main arguments of the proof.

\begin{proof}[Ideas of the proof]
 First observe that there is a one-one correspondence of characters of $F$-irreducible representations in $X(M_K)$ and $X(M_{K^\tau})$. We have to prove that the map $\mathfrak{t} \colon \rho \mapsto \rho^\tau$ is well defined for conjugate classes of $F$-irreducible representations. So let $\rho = (\rho_1,  \rho_2)$ and $\varrho = (\varrho_1, \varrho_2)$ be two conjugate $\SU$-representations of $\pi_1(M_K) \cong \pi_1(M_1) \star _{\pi_1(F)}\pi_1(M_2)$. There exist $z \in \SU$ such that $\varrho = Ad_{z} \circ \rho$. By construction of the associated mutant representations we have:
  \[
  \rho_1^\tau = Ad_{x^{-1}} \circ \rho_1 \text{ and } \varrho_1^\tau = Ad_{y^{-1}} \circ \varrho_1
  \]
  for some $x, y \in \SU$. Using the same linear algebra computation as in~\cite[Lemma 2.2.1]{Tillmann2000} one has:
\[
Ad_{z^{-1}y^{-1}} \circ {\varrho}_{|\pi_1(F)} = {\rho}_{|\pi_1(F)} = Ad_{x^{-1}z^{-1}} \circ {\varrho}_{|\pi_1(F)}.
\]
Since the restrictions of $\rho, \varrho$ to $\pi_1(F)$ are irreducible, we conclude that $zx = \pm yz$. As a result $\rho_1^\tau$ and $\varrho_1^\tau$ are conjugate by $z$. Hence $\varrho^\tau = Ad_z \circ \rho^\tau$ and $\mathfrak{t}$ is well defined on $F$-irreducible characters. Moreover, we can construct an inverse to that map because $\left( K^\tau\right)^\tau = K$.
  
  Let us denote $X_F(M_K)$ (resp. $X_F(M_{K^\tau})$) be the set of all conjugate classes of irreducible representations of $\pi_1(M_K)$ (resp. $\pi_1(M_{K^\tau}))$ whose restriction to $\pi_1(F)$ is also irreducible.  We have thus an isomorphism $\mathfrak{t}$ between $X_F(M_K)$ and $X_F(M_{K^\tau})$.
  
  The birationnality of $\mathfrak{t}$ comes from the same arguments used in~\cite[\S~2.2]{Tillmann2000}.
\end{proof}

We close this paragraph on the mutant representation by a remark.

\begin{remark}[A digression on $\SL$-character variety and hyperbolic knots]
If $K$ is a hyperbolic knot, then there is a discrete and faithful
representation of $G_K$ into $\mathrm{PSL}_2(\CC)$ which lifts to a
representation $\rho_0\colon G_K \to \SL$. Such a representation is
irreducible. D.~Ruberman proved~\cite{Ruberman} the following result
about the discrete and faithful representation (see
also~\cite[Corollaries 3 and 4]{Tillmann2004}).

\begin{proposition}
  Let $K$ be a hyperbolic knot and consider a mutation sphere $(F,
  \tau)$. Then the mutant knot $K^\tau$ is hyperbolic (with the same
  volume as $K$), and the discrete and faithful representation
  $\rho_0\colon G_K \to \SL$ is $F$-irreducible, moreover the
  corresponding mutant representation $\rho_0^\tau\colon G_{K^\tau} \to \SL$ 
  is the discrete and faithful representation of the
  hyperbolic structure of $K^\tau$.
\end{proposition}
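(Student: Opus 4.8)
The plan is to follow Ruberman's geometric argument \cite{Ruberman}: realize the mutation surface geometrically inside the complete hyperbolic structure on $M_K$, and then cut and reglue isometrically. First I would use the fact that the $4$-punctured sphere $F$ is incompressible and boundary-incompressible (essential) in $M_K$ to isotope it to a pleated surface, so that $F$ inherits a complete finite-area hyperbolic metric with four cusps; after separately treating the finitely many accidental-parabolic configurations one may arrange that this induced structure is genuine. The restriction $\rho_0|_{\pi_1(F)}$ is then the holonomy of this hyperbolic structure on $F$: since $F$ has negative Euler characteristic and is not an annulus, its holonomy is discrete, faithful and non-elementary, hence irreducible. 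This already yields the $F$-irreducibility of $\rho_0$.

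The crucial step is the observation that the three mutation involutions of the $4$-punctured sphere act trivially on the Teichm\"uller space of $F$; equivalently, each $\tau$ is isotopic to an isometry $\tau_0$ of \emph{every} complete hyperbolic structure on $F$. This is the special feature of the $4$-punctured sphere that makes mutation volume-preserving. Granting it, cut $M_K$ along $F$ into the pieces $M_1$ and $M_2$ and reglue them by the isometry $\tau_0$ in place of the identity. Because $\tau_0$ is an isometry of the induced structure on $F$, the hyperbolic metrics on $M_1$ and $M_2$ still match along $F$ after regluing, producing a complete hyperbolic metric on $M_{K^\tau} = M_1 \cup_\tau M_2$ of the same volume as $M_K$. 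By Mostow rigidity this metric is \emph{the} complete hyperbolic structure, so $K^\tau$ is hyperbolic with $\mathrm{vol}(K^\tau) = \mathrm{vol}(K)$.

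It remains to identify the holonomy of the reglued structure with the mutant representation $\rho_0^\tau$. Since $\tau_0$ is an orientation-preserving isometry of the hyperbolic surface $F$, its effect on the holonomy is by conjugation, which in the notation of Lemma~\ref{lemma:tau_gievn_Ad} is exactly the relation $\rho_0|_{\pi_1(F)} \circ \tau_* = Ad_x \circ \rho_0|_{\pi_1(F)}$ for a suitable $x \in \SL$ realizing $\tau_0$. The reglued manifold therefore has fundamental group $\pi_1(M_1) \ast_{\tau_*(\pi_1(F))} \pi_1(M_2)$, and its holonomy is obtained by keeping $\rho_2$ on $\pi_1(M_2)$ and twisting $\rho_1$ on $\pi_1(M_1)$ by $Ad_{x^{-1}}$, so that the two restrictions agree on the amalgamating subgroup. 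This is precisely the definition of $\rho_0^\tau$ given above, so $\rho_0^\tau$ is the discrete and faithful representation of the hyperbolic structure on $K^\tau$.

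The main obstacle is the geometric input of the second paragraph: proving that the mutation involution is realized by an isometry of the induced hyperbolic structure on $F$ (the triviality of its action on Teichm\"uller space), together with the careful pleated-surface realization of $F$ inside $M_K$ and the handling of accidental parabolics. Once this rigidity statement is in hand, the cut-and-paste construction and the identification of holonomies are essentially formal, and Mostow rigidity upgrades the reglued metric to the complete structure.
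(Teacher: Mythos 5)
Your proposal is correct in outline, but it takes a much longer route than the paper, which essentially delegates everything to Ruberman: the paper's sketch cites \cite[Corollary 1.4]{Ruberman} for hyperbolicity and equality of volumes (and implicitly for the identification of the mutant holonomy), and disposes of $F$-irreducibility in one line by noting that the restriction $\psi_0 = \rho_0|_{\pi_1(F)}$ of a discrete faithful representation is itself discrete and faithful, hence irreducible --- the quick algebraic point being that a reducible subgroup of $\SL$ is metabelian, whereas $\pi_1(F) \simeq \mathbf{F}_3$ is not, so faithfulness alone forces irreducibility without any appeal to the induced hyperbolic structure on $F$. What you have written is, in effect, a reproof of Ruberman's theorem: pleated-surface realization of the essential Conway sphere, the rigidity of the $4$-punctured sphere (the mutation involutions act trivially on Teichm\"uller space), isometric regluing, Mostow rigidity, and the identification of the reglued holonomy with $\rho_0^\tau$. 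That buys you a self-contained argument and, usefully, an explicit justification of the third assertion (that $\rho_0^\tau$ is the holonomy of the complete structure on $M_{K^\tau}$), which the paper's sketch does not spell out. One point in your second paragraph deserves more care: since $F$ is in general only pleated, not totally geodesic, ``the metrics still match along $F$'' is not automatic from $\tau_0$ being an isometry of the intrinsic hyperbolic structure; you also need $\tau_0$ to preserve the bending lamination, which follows from the stronger fact that the mutation involutions of the $4$-punctured sphere preserve the isotopy class of \emph{every} essential simple closed curve. This is precisely the key lemma in Ruberman's proof, and your argument is complete once it is invoked.
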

\begin{proof}[Sketch of the proof]
  The part on hyperbolicity and volume is~\cite[Corollary 1.4]{Ruberman}.

  Let $\psi_0$ be the restriction of $\rho_0$ to $\pi_1(F)$. One can
  observe that as a restriction, $\psi_0$ is discrete and faithful, so
  it is in particular irreducible thus $\rho_0$ is $F$-irreducible.

\end{proof}

\end{remark}

%Moreover, one can prove that $\rho\colon G_K \to \SU$ is $\mu$-regular
%if and only if $\rho^\tau\colon G_{K^\tau} \to \SU$ is
%$\mu^\tau$-regular. Here $\mu^\tau$ denoted the meridian of the mutant
%knot $K^\tau$ as defined above.

\subsection{Regularity property of mutant representations}

In the previous subsection, we construct a map $\mathfrak{t}\colon X(M_K) \to X(M_{K^\tau})$ which is an isomorphism from $X_F(M_K)$ into $X_F(M_{K^\tau})$. Here we are interested in the corresponding tangent map at an $F$-irreducible representation $\rho\colon G_K \to \SU$.  Recall that for a regular representation one has $T_\rho X(M_K) \cong H^1_\rho(M_K)$. To understand the tangent map corresponding to $\mathfrak{t}\colon X(M_K) \to X(M_{K^\tau})$ at a regular and $F$-irreducible representation $\rho$ we interpret it at the level of $(Ad \circ \rho)$-twisted cohomology group and explicitly construct an isomorphism 
\begin{equation}\label{isom_H1}
\tau^{\sharp} : H^1_\rho(M_K) \to H^1_{\rho^\tau}(M_{K^\tau}).
\end{equation}
 Again, $\rho_i = \rho_{|\pi_1(M_i)}$, $i = 1, 2$, and $\rho_F = \rho_{|\pi_1(F)}$	 denotes the restrictions of $\rho$.

\begin{notation*}
In the sequel, we use the following notation:
\[
{}^{x}\!\rho = Ad_x \circ \rho, \text{ for } x \in \SU \text{ and } \rho \text{ a representation}.
\]
\end{notation*}

Let $\rho$ be any representation of $G_K$. The construction of the isomorphism of Equation~(\ref{isom_H1}) is based on the following technical result.
\begin{lemma}
	If $z \in \Der{\rho_F}{(F)}$, then $z \circ \tau_* \in \Der{^{x}\!\rho_F}{(F)}$ is such that: 
\begin{equation}
\label{E:Der}
	z \circ \tau_* = {}^{x}\!z + \delta
\end{equation}
with some $\delta \in \Inn{^{x}\!\rho_F}{(F)}$, where ${}^{x}\!z = Ad_x \circ z$.
\end{lemma}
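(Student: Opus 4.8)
The plan is to unwind the definition of a twisted derivation and use the relation $\psi\circ\tau_* = Ad_x\circ\psi$ from Lemma~\ref{lemma:tau_gievn_Ad} (applied to $\psi = \rho_F$) to directly compute how the cocycle $z\circ\tau_*$ fails to be $Ad_x\circ z$. The point is that $z\circ\tau_*$ is a cocycle with respect to the twisted representation $Ad\circ({}^x\!\rho_F) = Ad\circ (Ad_x\circ\rho_F)$, because $\rho_F\circ\tau_* = {}^x\!\rho_F$ on $\pi_1(F)$, so the cocycle relation for $z\circ\tau_*$ is governed by $Ad\circ{}^x\!\rho_F$. Thus both $z\circ\tau_*$ and ${}^x\!z = Ad_x\circ z$ live in $\Der{^x\!\rho_F}{(F)}$, and their difference is automatically a cocycle in the same space; the whole content is to show this difference is an \emph{inner} derivation.

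\textbf{The key computation.} First I would verify that ${}^x\!z = Ad_x\circ z$ is indeed an ${}^x\!\rho_F$-twisted derivation: using the cocycle relation for $z$ and the identity $Ad_x\circ Ad_{\rho_F(g_1)} = Ad_{{}^x\!\rho_F(g_1)}\circ Ad_x$, one checks directly that ${}^x\!z(g_1g_2) = {}^x\!z(g_1) + Ad_{{}^x\!\rho_F(g_1)}\,{}^x\!z(g_2)$. Next I would do the same for $z\circ\tau_*$: since $\tau_*$ is a group homomorphism of $\pi_1(F)$ and $\rho_F\circ\tau_* = Ad_x\circ\rho_F = {}^x\!\rho_F$, the map $z\circ\tau_*$ satisfies
\[
(z\circ\tau_*)(g_1g_2) = z(\tau_*g_1) + Ad_{\rho_F(\tau_*g_1)}\,z(\tau_*g_2) = (z\circ\tau_*)(g_1) + Ad_{{}^x\!\rho_F(g_1)}\,(z\circ\tau_*)(g_2),
\]
so $z\circ\tau_*\in\Der{^x\!\rho_F}{(F)}$ as claimed. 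Setting $\delta := z\circ\tau_* - {}^x\!z$, which lies in $Z^1_{{}^x\!\rho_F}(F) = \Der{^x\!\rho_F}{(F)}$ since it is a difference of two elements there, the task reduces to producing an element $a\in\su$ with $\delta(g) = a - Ad_{{}^x\!\rho_F(g)}\,a$ for all $g\in\pi_1(F)$.

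\textbf{Where I expect the obstacle.} The hard part will be exhibiting the element $a$ realizing $\delta$ as an inner derivation — this is where the irreducibility of $\rho_F$ (i.e.\ the $F$-irreducibility hypothesis) must enter. Since $\rho_F$ is irreducible, we have $H^1_{\rho_F}(F)\cong H^1_{{}^x\!\rho_F}(F)$ and, more usefully, $B^1_{{}^x\!\rho_F}(F)\cong\su$ (the inner derivations form a copy of $\su$, as noted after the definition of inner derivations, since $H^0_{{}^x\!\rho_F}(F)=0$). My approach would be to show $\delta$ represents the trivial class in $H^1_{{}^x\!\rho_F}(F)$. The natural mechanism is that precomposition with the automorphism $\tau_*$ and the twist by $Ad_x$ act compatibly on cohomology: because $\tau_*$ is realized geometrically by the rotation $\tau$ (which on $\pi_1(F)$ permutes the generators in a controlled way and, crucially for a \emph{positive} mutation, is compatible with the sign/orientation data), the induced map on $H^1$ coincides with the isomorphism induced by conjugation by $x$, so that $[z\circ\tau_*] = [{}^x\!z]$ in $H^1_{{}^x\!\rho_F}(F)$, forcing $\delta$ to be a coboundary, i.e.\ inner. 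I would either argue this via the explicit action of $\tau_*$ on the generators $a,b,c,d$ (using $abcd=1$ and that $\tau_*$ is an involution preserving the common trace condition $\chi(a)=\chi(b)=\chi(c)=\chi(d)$), or appeal to Schur's lemma together with the uniqueness-up-to-sign of $x$ in Equation~\eqref{eqn:psi}, which pins down the difference $\delta$ up to the freedom captured exactly by $\Inn{^x\!\rho_F}{(F)}\cong\su$. The cleanest route is likely the cohomological one: show both classes pull back the same fundamental cohomology class, so their difference vanishes in $H^1$ and hence is represented by an inner derivation $\delta$.
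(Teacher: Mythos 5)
Your setup is fine and matches the paper's: checking that both $z\circ\tau_*$ and ${}^{x}\!z$ lie in $\Der{{}^{x}\!\rho_F}{(F)}$ is a routine computation from $\rho_F\circ\tau_*=Ad_x\circ\rho_F$, and the problem does reduce to showing that $\delta:=z\circ\tau_*-{}^{x}\!z$ is inner. But the step that carries all the content is missing. Your ``cohomological route'' --- prove that $[z\circ\tau_*]=[{}^{x}\!z]$ in $H^1_{{}^{x}\!\rho_F}(F)$ because ``the induced map on $H^1$ coincides with the isomorphism induced by conjugation by $x$'' --- is circular: that identity of maps on $H^1$ is exactly equivalent to $\delta$ being a coboundary, and it is precisely the content of Corollary~\ref{Cor:Conj}, which the paper \emph{deduces from} this lemma rather than the other way around. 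Your fallback (Schur's lemma plus the uniqueness of $x$ up to sign ``pins down $\delta$ up to the freedom captured by $\Inn{{}^{x}\!\rho_F}{(F)}$'') is not an argument either: knowing $x$ up to sign says nothing a priori about which cocycle $\delta$ you get. Also, positivity of the mutation plays no role here; the lemma holds for any of the three involutions, so invoking the sign/orientation data is a red herring.

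The paper's actual mechanism, which you would need to supply, is a differentiation argument: integrate $z$ to a germ of representations $\varphi_t\colon\pi_1(F)\to\SU$ with $\varphi_0=\rho_F$ and $z(g)=\Diff{\varphi_t(g)\rho_F(g)^{-1}}{t}{t=0}$, observe that Lemma~\ref{lemma:tau_gievn_Ad} applies to each $\varphi_t$ and yields elements $x_t\in\SU$ with $\varphi_t\circ\tau_*=Ad_{x_t}\circ\varphi_t$ and $x_0=x$, and then differentiate this identity at $t=0$. Setting $X=\diff{x_t}{t}{t=0}$, the terms involving $X$ assemble into
\[
z\circ\tau_*(g)={}^{x}\!z(g)+\bigl(1-Ad_{{}^{x}\!\rho_F(g)}\bigr)Xx^{-1},
\]
which exhibits the required element $a=Xx^{-1}\in\su$ explicitly. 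This is where the hypothesis on $\rho_F$ genuinely enters (one needs the family $x_t$ to exist and to depend differentiably on $t$, which uses irreducibility along the deformation); without producing such an $a$, your write-up asserts the conclusion rather than proving it.
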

\begin{proof}
The proof of Equation~(\ref{E:Der}) essentially consists in writing down the derivative of the equality $\rho_F \circ \tau_* = {}^{x}\!\rho_F = Ad_x \circ \rho_F$ (see Lemma~\ref{lemma:tau_gievn_Ad}).

It is easy to observe that $z \circ \tau_* \in \Der{^{x}\!\rho_F}{(F)}$. Let $\varphi_t : \pi_1(F) \to \SU$ be a germ at origin such that $\varphi_0 = \rho_F$ and satisfying, for all $g \in \pi_1(F)$, the following identity:
	$$z(g) = \Diff{\varphi_t(g) \rho_F(g)^{-1}}{t}{t=0}.$$
For all $t$ in a neighborhood of $0$, there exit $x_t \in \SU$ such that $\varphi_t \circ \tau_* = {}^{x_t}\!\varphi_t = Ad_{x_t} \circ \varphi_t$. 
Set $X=\diff{x_t}{t}{t=0}$ and take the derivative of the preceding equality with respect to $t$, one has, for all $g \in \pi_1(F)$,
 $$\diff{\varphi_t(\tau_*(g))}{t}{t=0} = X \rho_F(g) x^{-1} + a \diff{\varphi_t(g)}{t}{t=0}x^{-1} - x \rho_F(g)x^{-1}Xx^{-1}.$$
 Moreover, $\rho_F(\tau_*(g))=x \rho_F(g) x^{-1}$, thus
 \begin{multline}
\diff{\varphi_t(\tau_*(g))}{t}{t=0} {\rho_F(\tau_*(g))}^{-1}= Xx^{-1} - x \rho_F(g)x^{-1}(Xx^{-1})x \rho_F(g)x^{-1} \\ + x \diff{\varphi_t(g)}{t}{t=0} {\rho_F(g)}^{-1} x^{-1}.\notag
\end{multline}
Finally, for all $g \in \pi_1(F)$, 
$$z \circ \tau_*(g) = {}^{x}\!z(g) + (1-Ad_{{}^{x}\!\rho_F(g)}) Xx^{-1},$$
with $Xx^{-1} \in \sll$, 
\end{proof}

From this technical lemma, we deduce:
\begin{corollary}\label{Cor:Conj}
	If $h \in H^1_{\rho_F}(F)$, then $h \circ \tau_* \in H^1_{{}^{x}\!\rho_F}(F)$ and $h \circ \tau_* = {}^{x}\!h = Ad_x \circ h$.
\end{corollary}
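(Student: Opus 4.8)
The plan is to obtain the corollary by pushing the previous lemma down to cohomology, using the identification $H^1_{\rho_F}(F) \cong \Der{\rho_F}{(F)}/\Inn{\rho_F}{(F)}$ recorded in Section~\ref{S:preliminaries} (valid since the bouquet $F$ is a $K(\pi_1(F),1)$). A class $h \in H^1_{\rho_F}(F)$ is represented by a twisted derivation $z \in \Der{\rho_F}{(F)}$, and the claimed equality $h \circ \tau_* = {}^{x}\!h$ is to be understood as an equality of cohomology classes in $H^1_{{}^{x}\!\rho_F}(F)$.

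First I would check that both operations occurring in the statement descend to well-defined maps $H^1_{\rho_F}(F) \to H^1_{{}^{x}\!\rho_F}(F)$; that is, that each sends $\Der{\rho_F}{(F)}$ into $\Der{^{x}\!\rho_F}{(F)}$ and carries $\Inn{\rho_F}{(F)}$ into $\Inn{^{x}\!\rho_F}{(F)}$. For precomposition by $\tau_*$ one uses $\rho_F \circ \tau_* = {}^{x}\!\rho_F$ from Lemma~\ref{lemma:tau_gievn_Ad}: an inner derivation $\delta(g) = a - Ad_{\rho_F(g)} a$ is sent to $\delta \circ \tau_*(g) = a - Ad_{{}^{x}\!\rho_F(g)} a$, again inner. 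For conjugation ${}^{x}\!(-) = Ad_x \circ (-)$ one checks the twisted cocycle relation directly and observes that the inner derivation above is sent to $g \mapsto Ad_x a - Ad_{{}^{x}\!\rho_F(g)}(Ad_x a)$, inner with parameter $Ad_x a$. These are one-line verifications from the cocycle identity.

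With both maps well defined on $H^1$, the corollary is immediate from Equation~(\ref{E:Der}): the lemma gives $z \circ \tau_* = {}^{x}\!z + \delta$ with $\delta \in \Inn{^{x}\!\rho_F}{(F)}$, so the classes of $z \circ \tau_*$ and ${}^{x}\!z$ agree in $H^1_{{}^{x}\!\rho_F}(F)$, which is exactly $h \circ \tau_* = {}^{x}\!h$.

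I expect the only delicate point to be purely notational bookkeeping — namely keeping straight that $z \circ \tau_*$ and ${}^{x}\!z$ both belong to $\Der{^{x}\!\rho_F}{(F)}$ rather than to $\Der{\rho_F}{(F)}$ — since all the real computational content has already been carried out in the proof of the preceding lemma. No further obstacle is anticipated.
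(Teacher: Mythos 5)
Your proposal is correct and follows exactly the route the paper intends: the corollary is deduced from Equation~(\ref{E:Der}) by passing to cohomology classes, where the inner derivation $\delta$ dies. Your added verification that both precomposition by $\tau_*$ and conjugation by $x$ descend to well-defined maps on $H^1$ is a sensible (if routine) supplement to what the paper leaves implicit.
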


Observe that the twisted cohomology groups $H^1_{\rho_1}(M_1)$ and $H^1_{{}^{x^{-1}}\!\rho_1}(M_1)$ are isomorphic. Moreover,  the isomorphism is induced by $\phi_x\colon z \mapsto Ad_{x^{-1}}z$ and will be denoted $\bar{\phi_x}$ in the sequel. Using Corollary~\ref{Cor:Conj} it is easy to deduce the following result.

\begin{claim}\label{claim:commutativity}
	Let $i_\ell\colon F \hookrightarrow M_\ell$, $\ell=1,2$, be the usual inclusions. The following diagram is commutative:
\begin{equation}\label{Diag}
\xymatrix{
H^1_{\rho_1}(M_1) \oplus H^1_{\rho_2}(M_2) \ar[d]_-{\bar{\phi_x} \oplus \mathrm{Id}}^-\cong \ar[r]^-{i_1^* + i_2^*} & H^1_{\rho_F}(F) \\
H^1_{{}^{x^{-1}}\!\rho_1}(M_1) \oplus H^1_{\rho_2}(M_2) \ar[ur]_-{\tau^*i_1^* + i_2^*} &
}
\end{equation}
\end{claim}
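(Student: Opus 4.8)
The plan is to verify commutativity summand by summand, reducing everything to the behavior of a single restriction class on $\pi_1(F)$ and then invoking Corollary~\ref{Cor:Conj}. First I would observe that on the second summand $H^1_{\rho_2}(M_2)$ the vertical map $\bar{\phi_x} \oplus \mathrm{Id}$ is the identity, and both composites send a class $z_2$ to $i_2^*(z_2)$; that part of the diagram is therefore trivially commutative, and only the first summand needs attention. Concretely, it suffices to show that for every $z_1 \in H^1_{\rho_1}(M_1)$ one has $i_1^*(z_1) = \tau^* i_1^*(\bar{\phi_x}(z_1))$ in $H^1_{\rho_F}(F)$.

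Next I would set $h = i_1^*(z_1) \in H^1_{\rho_F}(F)$ and track $z_1$ down and then across the diagram. Since $\bar{\phi_x}$ is induced by $z \mapsto Ad_{x^{-1}}\, z$ and restriction to $\pi_1(F)$ commutes with the pointwise adjoint twist (because $(\rho_1)_{|\pi_1(F)} = \rho_F$ and $({}^{x^{-1}}\!\rho_1)_{|\pi_1(F)} = {}^{x^{-1}}\!\rho_F$), I obtain
\[
i_1^*(\bar{\phi_x}(z_1)) = Ad_{x^{-1}} \circ h \in H^1_{{}^{x^{-1}}\!\rho_F}(F).
\]
Applying $\tau^*$, that is precomposition with $\tau_*$, gives at the level of cochains $\tau^*(Ad_{x^{-1}} \circ h) = Ad_{x^{-1}} \circ (h \circ \tau_*)$, since $Ad_{x^{-1}}$ is applied pointwise and commutes with precomposition by $\tau_*$. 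Here I would record that, by the relation $\rho_F \circ \tau_* = {}^{x}\!\rho_F$ of Lemma~\ref{lemma:tau_gievn_Ad}, the class $h \circ \tau_*$ lands in $H^1_{{}^{x}\!\rho_F}(F)$, so that $Ad_{x^{-1}} \circ (h \circ \tau_*)$ lands back in $H^1_{\rho_F}(F)$, matching the target of the diagram.

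Finally I would invoke Corollary~\ref{Cor:Conj}, which states precisely that $h \circ \tau_* = Ad_x \circ h$ in $H^1_{{}^{x}\!\rho_F}(F)$. Substituting this yields
\[
\tau^* i_1^*(\bar{\phi_x}(z_1)) = Ad_{x^{-1}} \circ Ad_x \circ h = h = i_1^*(z_1),
\]
which is exactly the desired identity on the first summand and completes the verification. The one genuinely delicate point, and the step I expect to require the most care, is the bookkeeping of twisting coefficients: at each arrow one must check which representation the relevant cohomology group is twisted by, and confirm that the composite ``twist by $Ad_{x^{-1}}$, then precompose with $\tau_*$'' returns a class twisted by $\rho_F$ itself rather than by some conjugate, which is exactly where $\rho_F \circ \tau_* = {}^{x}\!\rho_F$ enters. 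Once Corollary~\ref{Cor:Conj} is in hand this is a short cocycle-level computation, so no separate estimate or additional input is needed.
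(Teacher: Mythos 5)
Your proof is correct and follows exactly the route the paper intends: the paper gives no written proof beyond the remark that the claim follows from Corollary~\ref{Cor:Conj}, and your cocycle-level computation, including the reduction to the first summand and the careful tracking of which representation each group is twisted by, is precisely the "easy deduction" being alluded to. No gaps.
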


Let $j_\ell\colon M_\ell \hookrightarrow M_K$ and $j'_\ell\colon M_\ell \hookrightarrow M_{K^\tau}$, $\ell=1, 2$, be the usual inclusions. Write down the Mayer-Vietoris sequences in cohomology respectively associated to the splittings $M_K = M_1 \cup_{\mathrm{Id}} M_2$ and $M_{K^\tau} = M_1 \cup_{\tau} M_2$ and twisted by $\rho$ and $\rho^\tau$. We obtain:
\begin{equation}
\label{Suite1}
\xymatrix@1@+.5pc{0 \ar[r] & H^1_\rho(M_K) \ar[r]^-{j_1^* \oplus -j_2^*} & H^1_{\rho_1}(M_1) \oplus H^1_{\rho_2}(M_2) \ar[r]^-{{i_1^*+i_2^*}} & H^1_{\rho_F}(F) \ar[r] &  \cdots}
\end{equation}
and
\begin{equation}
\label{Suite2}
\xymatrix@1@+.9pc{0 \ar[r] & H^1_{\rho^\tau}(M_{K^\tau}) \ar[r]^-{{j'_1}^* \oplus -{j'_2}^*} & H^1_{\rho_1^\tau}(M_1) \oplus H^1_{\rho_2^\tau}(M_2) \ar[r]^-{{\tau^*i_1^*+i_2^*}} & H^1_{\rho_F^\tau}(F) \ar[r] &  \cdots}
\end{equation}
where $\rho_1^\tau = {}^{x^{-1}}\!\rho_1$, $\rho_2^\tau=\rho_2$ and $\rho_F^\tau = \rho_F$.

Combine these two exact sequences by using Diagram~(\ref{Diag}), one has the following commutative diagram:
\[
\xymatrix@-.7pc{0 \ar[r] & H^1_\rho(M_K) \ar[r] & H^1_{\rho_1}(M_1) \oplus H^1_{\rho_2}(M_2) \ar[r] \ar[d]_-{\bar{\phi_x} \oplus \mathrm{Id}}^-\cong& H^1_{\rho_F}(F) \ar[r] \ar[d]^-=& H^2_\rho(M_K) \ar[r]&  \cdots \\
0 \ar[r] & H^1_{\rho^\tau}(M_{K^\tau}) \ar[r] & H^1_{\rho^\tau_1}(M_1) \oplus H^1_{\rho^\tau_2}(M_2) \ar[r] & H^1_{\rho_F}(F) \ar[r] & H^2_{\rho^\tau}(M_{K^\tau}) \ar[r]&\cdots}
\]
Thus we can restrict the isomorphism 
$$\bar{\phi_x} \oplus \mathrm{Id}\colon H^1_{\rho_1}(M_1) \oplus H^1_{\rho_2}(M_2) \to H^1_{\rho^\tau_1}(M_1) \oplus H^1_{\rho^\tau_2}(M_2)$$ to an isomorphism 
\begin{equation}\label{isomutant}
\tau^\sharp\colon H^1_\rho(M_K) \to H^1_{\rho^\tau}(M_{K^\tau}).
\end{equation}

An immediate consequence is the following: 

\begin{theorem}
A representation $\rho\colon G_K \to \SU$ is regular if, and only if, its mutant representation $\rho^\tau \colon G_{K^\tau} \to \SU$ is also regular.
\end{theorem}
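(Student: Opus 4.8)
The plan is to read the statement directly off the isomorphism $\tau^\sharp$ of Equation~(\ref{isomutant}), after observing that nothing in its construction used the regularity of $\rho$. Indeed, the commutative diagram assembled from the two Mayer--Vietoris sequences~(\ref{Suite1}) and~(\ref{Suite2}) rests only on the $F$-irreducibility of $\rho$: it enters through the twisted homology computations of Lemma~\ref{lemma:twisted_homology_F_and_M_i}, through Corollary~\ref{Cor:Conj} and Claim~\ref{claim:commutativity}, and through the fact that the two relevant vertical maps, $\bar{\phi_x}\oplus\mathrm{Id}$ and the identity on $H^1_{\rho_F}(F)$, are isomorphisms. Restricting the middle vertical isomorphism to the kernels of the horizontal maps---equivalently, a short diagram chase---then yields the isomorphism $\tau^\sharp\colon H^1_\rho(M_K)\to H^1_{\rho^\tau}(M_{K^\tau})$ for \emph{every} $F$-irreducible $\rho$, with no hypothesis on $\dim_\RR H^1_\rho(M_K)$.

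Granting this, the argument is immediate. Since $\tau^\sharp$ is an isomorphism of finite-dimensional real vector spaces, one has
\[
\dim_\RR H^1_\rho(M_K) = \dim_\RR H^1_{\rho^\tau}(M_{K^\tau}).
\]
By definition $\rho$ is regular exactly when the left-hand side equals $1$, and $\rho^\tau$ is regular exactly when the right-hand side equals $1$, so the two conditions coincide. Combining this with the equivalence ``$\rho$ is irreducible if and only if $\rho^\tau$ is irreducible'' recalled above (following~\cite{Ruberman, Tillmann2000}) gives the claimed equivalence of regularity.

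There is no substantial obstacle: the theorem is a formal corollary of the preceding construction, which is precisely why the text can announce it as an immediate consequence. The only point deserving attention is to confirm that $\tau^\sharp$ is genuinely available under $F$-irreducibility alone---that is, that the assembly of the commutative diagram never silently assumed $\dim_\RR H^1_\rho(M_K)=1$. Once this is checked, the dimension equality, and with it the equivalence of regularity, follows at once.
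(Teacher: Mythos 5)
Your argument is correct and is exactly the paper's: the theorem is stated there as ``an immediate consequence'' of the isomorphism $\tau^{\sharp}$ of Equation~(\ref{isomutant}), whose construction indeed uses only $F$-irreducibility (via Lemma~\ref{lemma:twisted_homology_F_and_M_i}, Corollary~\ref{Cor:Conj} and Claim~\ref{claim:commutativity}) and never the hypothesis $\dim_{\RR}H^1_\rho(M_K)=1$. Your added remark that irreducibility itself is preserved under mutation completes the equivalence of regularity as defined in the paper.
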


To compare the torsion form of $K$ and the one of one of its mutant $K^\tau$ we make the following technical hypothesis:

\begin{hypothesis*}
  Fix a (positive) mutation $(F, \tau)$ and suppose that
  $\rho\colon G_K \to \SU$ is \emph{regular} and \emph{$F$-irreducible}, which means that:
  \begin{enumerate}
  \item $\rho$ is regular  (\ie $\rho$ is irreducible and $\dim H^1_\rho(M_K) = 1$); 
  \item and the restriction $\rho_F$ of $\rho$ to $\pi_1(F)$ is irreducible.
\end{enumerate}
  \end{hypothesis*}
  
  Here is some other formulations of the preceding hypothesis.
  
  \begin{claim}
  Let  $\rho\colon G_K \to \SU$ be a regular representation. If $\rho$ is $F$-irreducible then $H^2_{\rho_i}(M_i) = 0$, for $i=1,2$.
  \end{claim}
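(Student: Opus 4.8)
The plan is to run the Mayer--Vietoris sequence in $(Ad\circ\rho)$-twisted cohomology for the splitting $M_K = M_1 \cup_F M_2$ and to read the dimension of $H^2_{\rho_i}(M_i)$ off from it. First I would record all the relevant dimensions. Since $\rho$, $\rho_1$, $\rho_2$ and $\rho_F$ are irreducible, every $H^0$ vanishes; by Lemma~\ref{lemma:twisted_homology_F_and_M_i} one has $\dim H^1_{\rho_F}(F)=6$ and $H^j_{\rho_F}(F)=0$ for $j\ne 1$, while $\dim H^1_{\rho_i}(M_i)=3+d_i$ where $d_i:=\dim H^2_{\rho_i}(M_i)$; moreover $M_K$ and the $M_i$ have the homotopy type of $2$-complexes, so their cohomology vanishes above degree $2$. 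Finally, regularity together with Theorem~\ref{thm:reg} gives $\dim H^1_\rho(M_K)=\dim H^2_\rho(M_K)=1$. With these inputs the Mayer--Vietoris sequence collapses to the five--term exact sequence
\[
0 \to H^1_\rho(M_K) \to H^1_{\rho_1}(M_1)\oplus H^1_{\rho_2}(M_2) \xrightarrow{\,i_1^*+i_2^*\,} H^1_{\rho_F}(F) \to H^2_\rho(M_K) \to H^2_{\rho_1}(M_1)\oplus H^2_{\rho_2}(M_2) \to 0.
\]

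Next I would do the bookkeeping. Exactness on the left shows $\ker(i_1^*+i_2^*)$ is the image of $H^1_\rho(M_K)$, of dimension $1$, so $\rk(i_1^*+i_2^*)=(6+d_1+d_2)-1=5+d_1+d_2$. As this rank cannot exceed $\dim H^1_{\rho_F}(F)=6$, I get at once $d_1+d_2\le 1$. Equivalently, $H^2_{\rho_1}(M_1)\oplus H^2_{\rho_2}(M_2)=0$ if and only if the connecting homomorphism $H^1_{\rho_F}(F)\to H^2_\rho(M_K)$ is onto, i.e. $i_1^*+i_2^*$ has a one--dimensional cokernel. So the remaining task is to exclude $d_1+d_2=1$.

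Suppose $d_i\ge 1$ for some $i$. Because $\rho_i$ is irreducible one has $H^0_{\rho_i}(\bord M_i)=0$, so in the long exact sequence of the pair $(M_i,\bord M_i)$ the map $H^1_{\rho_i}(M_i,\bord M_i)\to H^1_{\rho_i}(M_i)$ is injective; by Poincar\'e--Lefschetz duality $\dim H^1_{\rho_i}(M_i,\bord M_i)=d_i\ge 1$, so there is a nonzero class $\alpha\in H^1_{\rho_i}(M_i)$ restricting to zero on $\bord M_i$, in particular on $F$. Placing $\alpha$ in the $i$-th slot and $0$ in the other, the resulting element lies in $\ker(i_1^*+i_2^*)$, hence equals the image of a generator $\beta$ of $H^1_\rho(M_K)$; consequently $\beta$ restricts to zero on the complementary piece $M_{3-i}$, and a fortiori on the two annuli of $\bord M_K$ lying in $M_{3-i}$. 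On the other hand, the pair sequence for $(M_K,\bord M_K)$ together with regularity shows that the connecting map $H^0_\rho(\bord M_K)\to H^1_\rho(M_K,\bord M_K)$ is an isomorphism of one--dimensional spaces, forcing $H^1_\rho(M_K,\bord M_K)\to H^1_\rho(M_K)$ to vanish, whence $i^*\colon H^1_\rho(M_K)\to H^1_\rho(\bord M_K)$ is injective and $i^*\beta\ne 0$. Since those two annuli are isotopic on the boundary torus to a meridian, $i^*\beta$ would then lie in the longitudinal line $\ker\big(H^1_\rho(\bord M_K)\to H^1_\rho(\mu)\big)$.

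The main obstacle is precisely this last point: I must know that for a regular representation the restriction $H^1_\rho(M_K)\to H^1_\rho(\mu)$ to a meridian is nonzero --- equivalently that the meridian trace is a local coordinate on $\mathrm{Reg}(K)$, so that the image of $i^*$ is transverse to the longitudinal line. This is where regularity is used in an essential way (beyond the numerical bound $d_1+d_2\le 1$), and it is available from the standard properties of regular representations (see~\cite{Heu:2003, JDFourier}). Granting it, $i^*\beta$ cannot be longitudinal, a contradiction; hence $d_1=d_2=0$, that is, $H^2_{\rho_i}(M_i)=0$ for $i=1,2$.
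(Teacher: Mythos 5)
Your dimension count is correct and does real work: from $\dim H^1_{\rho_F}(F)=6$, $\dim H^1_{\rho_i}(M_i)=3+d_i$ and $\dim H^1_\rho(M_K)=1$ the Mayer--Vietoris sequence forces $d_1+d_2\leqslant 1$, and the construction of the class $\beta$ with $i^*\beta$ lying in $\ker\bigl(H^1_\rho(\bord M_K)\to H^1_\rho(\mu)\bigr)$ is also sound (the cores of the four annuli are indeed meridians, and injectivity of $i^*$ on $H^1_\rho(M_K)$ follows from regularity as you say). The gap is exactly where you flag it, and it is not a removable one: the assertion that regularity forces the restriction $H^1_\rho(M_K)\to H^1_\rho(\mu)$ to be nonzero is the condition of \emph{$\mu$-regularity}, which in the literature you cite (Porti's memoir, and the Fourier paper) is introduced as an \emph{additional} hypothesis precisely because it does not follow from $\dim H^1_\rho(M_K)=1$. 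Regularity only gives that $i^*\bigl(H^1_\rho(M_K)\bigr)$ is a line in the two-dimensional space $H^1_\rho(\bord M_K)$; nothing prevents that line from being the "longitudinal" one. Geometrically, non-$\mu$-regular regular representations are the critical points of the meridian trace function on the one-dimensional manifold $\mathrm{Reg}(K)$, and such critical points do occur. So your contradiction is not reached under the stated hypotheses, and the claim remains unproved by this route.

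For comparison, the paper avoids the issue entirely: instead of bounding $d_1+d_2$ and arguing by contradiction, it proves directly that the connecting homomorphism $\delta\colon H^1_{\rho_F}(F)\to H^2_\rho(M_K)$ of the Mayer--Vietoris sequence is onto, by factoring it as $H^1_{\rho_F}(F)\to H^1_{\rho_F}(\bord F)\to H^2_\rho(\bord M_K)\to H^2_\rho(M_K)$, where the last arrow is the inverse of the isomorphism $i^*\colon H^2_\rho(M_K)\to H^2_\rho(\bord M_K)$ that holds for regular representations (Porti's Corollary 3.23, quoted in Section~\ref{S:torsionform}). Surjectivity of $\delta$ then kills $H^2_{\rho_1}(M_1)\oplus H^2_{\rho_2}(M_2)$ by exactness. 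The input from regularity there is an $H^2$-statement about the boundary torus, which is genuinely weaker than $\mu$-regularity; if you want to salvage your argument you would have to either add $\mu$-regularity to the hypotheses (which changes the statement) or replace the last step by an argument of this $H^2$ type.
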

  
  \begin{proof}
  Consider a regular representation $\rho\colon G_K \to \SU$ which is also $F$-irreducible. One has $H^0_{\rho_F}(F) = H^2_{\rho_F}(F) = 0$ (see Lemma~\ref{lemma:twisted_homology_F_and_M_i}) and write down the Mayer-Vietoris sequence associated to the splitting $M_K = M_1 \cup_{\mathrm{Id}} M_2$:
  \begin{equation}\label{MV1}
  \xymatrix@1@-.3pc{\cdots \ar[r] & H^1_{\rho_1}(M_1) \oplus H^1_{\rho_2}(M_2) \ar[r] & H^1_{\rho_F}(F) \ar[r]^-\delta & H^2_\rho(M_K) \ar[r] & H^2_{\rho_1}(M_1) \oplus H^2_{\rho_2}(M_2) \ar[r] & 0.}
  \end{equation}
  We prove that  the connecting homomorphism $\delta\colon H^1_{\rho_F}(F) \to H^2_\rho(M_K)$ is onto as follows. One can view this homomorphism as the composition of the three following homomorphisms:
  \begin{itemize}
  \item $H^1_{\rho_F}(F) \to H^1_{\rho_F}(\partial F)$ induced by the the usual inclusion $\partial F \hookrightarrow F$,
  \item $H^1_{\rho_F}(\partial F) \to H^2_{\rho}(\partial M_K)$ which is the restriction to the boundary of the connecting homomorphism $\delta\colon H^1_{\rho_F}(F) \to H^2_\rho(M_K)$ appearing in the Mayer-Vietoris sequence,
  \item $H^2_{\rho}(\partial M_K) \to H^2_\rho(M_K)$ is the inverse of the isomorphism $H^2_\rho(M_K) \to H^2_{\rho}(\partial M_K)$ (as $\rho$ is regular) induced by the usual inclusion $\partial M_K \hookrightarrow M_K$.
\end{itemize}
All of these three homomorphisms are onto and thus $\delta\colon H^1_{\rho_F}(F) \to H^2_\rho(M_K)$ is also onto. From the Mayer-Vietoris sequence in Equation~(\ref{MV1}), we conclude that $H^2_{\rho_i}(M_i) = 0$, for $i=1,2$.
  \end{proof}
    
  \begin{remark}
  It is easy to prove, using the proof of Lemma~\ref{lemma:twisted_homology_F_and_M_i}, that for a regular and $F$-irreducible representation $\rho\colon G_K \to \SU$, one has:
  \begin{enumerate}
  \item $\dim H^1_{\rho_i}(M_i) = 3$, for $i=1, 2$,
  \item the homomorphism $H^1_{\rho_i}(M_i, \bord M_i) \to H^1_{\rho_i}(M_i)$ is 0, for $i=1, 2$.
\end{enumerate}
  \end{remark}
  
  \begin{remark}
  If  $\rho\colon G_K \to \SU$ is regular and $F$-irreducible, then, all its restrictions $\rho_F = \rho_{|\pi_1(F)}$ and $\rho_i = \rho_{|\pi_1(M_i)}$ ($i = 1, 2$) are {irreducible}.
\end{remark}

\section{Behavior of the torsion form by positive mutation}
\label{S:proof}

In this section, we prove the Main Theorem which asserts that the torsion form is invariant by \emph{positive} mutation using the notation introduced in the previous section.

\begin{theorem}\label{theorem:invariance_tors_mutation}
  If $\rho\colon G_{K} \to \SU$ is a regular and $F$-irreducible
  representation, then
  \[
  \tau^{K^\tau}_{\rho^\tau} \circ \tau^{\sharp} = \tau^{K}_{\rho}.
  \]
  Here $\rho^\tau\colon G_{K^\tau} \to \SU$ denotes the mutant
  representation associated to the representation $\rho\colon G_{K} \to \SU$ and $\tau^{\sharp}\colon H^1_\rho(M_K) \to H^1_{\rho^\tau}(M_{K^\tau})$ is the isomorphism of Equation~(\ref{isomutant}).
\end{theorem}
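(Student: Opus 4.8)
The plan is to apply the Mayer--Vietoris formula (Theorem~\ref{FMV}) to the two splittings $M_K = M_1 \cup_{\mathrm{id}} M_2$ and $M_{K^\tau} = M_1 \cup_\tau M_2$, and to verify that every factor on the right-hand side either coincides for the two knots or transforms in a controlled way under the isomorphisms $\bar{\phi_x}$, $\mathrm{Id}$ and $\tau^\sharp$. Since $M_1$ and $M_2$ are literally the same pieces for both knots and only the identification along $F$ differs, the whole discrepancy is concentrated in the gluing, in the distinguished homology bases, and in the sign factors.

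First I would fix bases $\mathbf{h}^1_{M_1}, \mathbf{h}^1_{M_2}, \mathbf{h}^1_F$ for the twisted cohomology groups $H^1_{\rho_1}(M_1), H^1_{\rho_2}(M_2), H^1_{\rho_F}(F)$ (the groups $H^0$ and $H^2$ of the pieces vanish by Lemma~\ref{lemma:twisted_homology_F_and_M_i} and by the vanishing $H^2_{\rho_i}(M_i)=0$ established above), and for $M_K$ I would use the distinguished bases $u$ of $H^1_\rho(M_K)$ and $h^{(2)}_\rho$ of $H^2_\rho(M_K)$. On the mutant side I would transport all of these by the vertical isomorphisms of the commutative diagram relating the two Mayer--Vietoris sequences: namely $\bar{\phi_x}\oplus\mathrm{Id}$ on $H^1_{\rho_1}(M_1)\oplus H^1_{\rho_2}(M_2)$, the identity on $H^1_{\rho_F}(F)$ (legitimate since $\rho_F^\tau = \rho_F$), the map $\tau^\sharp$ on $H^1$ of the total space, and the induced map on $H^2$. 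With these choices the two Mayer--Vietoris sequences $\mathcal{H}$ and $\mathcal{H}^\tau$ become isomorphic as based acyclic complexes, so $\mathrm{tor}(\mathcal{H}) = \mathrm{tor}(\mathcal{H}^\tau)$.

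Next I would match the torsions of the pieces. For $M_2$ and $F$ there is nothing to do: $\rho_2^\tau = \rho_2$ and $\rho_F^\tau = \rho_F$, so the cochain complexes and the chosen bases are identical. For $M_1$, the map $\phi_x\colon z\mapsto Ad_{x^{-1}}z$ is a chain isomorphism from $C^*(M_1; Ad\circ\rho_1)$ to $C^*(M_1; Ad\circ\rho_1^\tau)$ that sends each cell basis $\mathbf{c}^i_{\su}$ to its image under $Ad_{x^{-1}}\in\mathrm{SO}(3)$; since $\det Ad_{x^{-1}} = 1$ the transition matrices are unimodular, and because the induced map in cohomology is precisely $\bar{\phi_x}$, I obtain $\mathrm{Tor}(M_1; Ad\circ\rho_1^\tau) = \mathrm{Tor}(M_1; Ad\circ\rho_1)$ for the transported basis. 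The combinatorial signs $\alpha$ and $\varepsilon$ depend only on the dimensions of the cochain complexes and cohomology groups, which are identical for the two splittings, so they too agree.

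The two remaining and genuinely \emph{delicate} points --- where I expect the real work to be --- are the behaviour of the distinguished generator $h^{(2)}$ and of the homology-orientation sign $\tau_0$. For $h^{(2)}$ I would check that the induced map $H^2_\rho(M_K)\to H^2_{\rho^\tau}(M_{K^\tau})$ carries $h^{(2)}_\rho$ to $h^{(2)}_{\rho^\tau}$: this should follow because the common meridian $\mu = \mu^\tau$ lies in the unchanged piece $M_2$, whence $P_{\rho^\tau} = P_\rho$, and the cup-product construction $(\phi^{(2)}_{P_\rho}\circ i^*)^{-1}(c)$ involves only data supported on the peripheral torus, which is respected by the diagram. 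For $\tau_0$ I would run the same scheme with untwisted real coefficients, comparing the real Mayer--Vietoris sequence $\mathcal{V}_\IR$ of $M_K$ with its counterpart for $M_{K^\tau}$: Lemma~\ref{lemma:varphi_i} together with Remark~\ref{remark:compare_homology_under_mutation} guarantees that for a \emph{positive} mutation the connecting maps $j^k_*$ send every $\xi_k, \eta_k$ to the same class $[\mu]$, so the real torsions coincide and the homology orientations given by $[\mu]$ and $[\mu^\tau]$ match, yielding $\tau_0 = \tau_0^\tau$. This is exactly where positivity is indispensable: for a negative mutation the signs of $j^1_*(\xi_1)$ and $j^2_*(\xi_2)$ disagree, the induced orientation of the real homology flips, and $\tau_0$ changes sign. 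Feeding all these equalities into the Mayer--Vietoris formula for both knots makes every factor agree, giving $\mathrm{Tor}(M_K;Ad\circ\rho,\{u,h^{(2)}_\rho\},\mathfrak{o}) = \mathrm{Tor}(M_{K^\tau};Ad\circ\rho^\tau,\{\tau^\sharp u, h^{(2)}_{\rho^\tau}\},\mathfrak{o}^\tau)$, which is the asserted identity $\tau^{K^\tau}_{\rho^\tau}\circ\tau^\sharp = \tau^K_\rho$.
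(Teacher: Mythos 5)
Your proposal is correct and follows essentially the same strategy as the paper: Mayer--Vietoris for both splittings, transport of bases by $\bar{\phi_x}\oplus\mathrm{Id}$ so that only the gluing data contribute, the identification $\mu=\mu^\tau$ and $P_{\rho^\tau}=P_\rho$ to show the distinguished generator $h^{(2)}$ is preserved, and a separate real-coefficient Mayer--Vietoris computation for the sign $\tau_0$ in which positivity enters exactly as you say. You correctly isolate the two genuinely delicate points ($h^{(2)}$ and $\tau_0$), which is where the paper also concentrates its effort.
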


The rest of this section is devoted to the proof of Theorem~\ref{theorem:invariance_tors_mutation}. The proof is divided
into two parts: in the first one we are interested in the 
``twisted part'' of the torsion, and in the second one in its ``sign part''.

We compute the torsions in the geometric bases described as follows.
Fix a presentation of the group $G_K$:
\[
\Gamma_K = \langle x_1, \ldots, x_n \; |\; r_1, \ldots, r_{n-1}\rangle.
\]
It is known, using a result due to Waldhausen~\cite{W}, that $M_K$ has the same simple homotopy type as the two-dimensional CW-complex $W_K$ constructed as follows. The $0$-skeleton of $W_K$ consists in a single $0$-cell, its $1$-skeleton is a wedge of $n$ oriented circles corresponding to the generators $x_1, \ldots, x_n$ and the $2$-skeleton consists in $(n-1)$ $2$-cells $D_1, \ldots, D_{n-1}$ where the attaching maps are given by the relations $r_1, \ldots, r_{n-1}$.  Let us write 
$$T^K_\rho(v) = \mathrm{Tor}(C_*(W_K; Ad \circ \rho); \{\varphi_\rho(v), h^{(2)}_\rho\}) = \mathrm{Tor}(C_*(M_K; Ad \circ \rho); \{\varphi_\rho(v), h^{(2)}_\rho\}),$$
 and 
 $$\varepsilon^K = \mathrm{sgn}(\mathrm{Tor}(C_*(W_K; \IR); \mathfrak{o})) = \mathrm{sgn}(\mathrm{Tor}(C_*(M_K; \IR); \mathfrak{o})).$$
  One has
\[
\tau^K_\rho(v) = \varepsilon^K \cdot T^K_\rho(v).
\]

\subsection{Computation of the twisted part of the torsion}
\label{SS:torsiontwist}

Here we compute the twisted part of the torsions of $M_K$ and $M_{K^\tau}$ using the
Mayer--Vietoris formula respectively associated to the splittings 
$M_K = M_{1} \cup_{\mathrm{id}} M_{2}$ and $M_{K^\tau} = M_{1} \cup_{\tau} M_{2}$.

\begin{remark}
In the case of a positive mutation, using Lemma~\ref{lemma:varphi_i}, observe that the meridian of $K$ and the one of $K^\tau$ can be defined by the same loop in $M_2$ (more precisely in the boundary of the mutation sphere). As a consequence, we can choose the same $(Ad \circ \rho)$-invariant vector $P_\rho \in H^0_\rho(T^2)$ for both $K$ and its positive mutant $K^\tau$. In the sequel, we will do that.
\end{remark}

\begin{proof}[Proof of 
Theorem~\ref{theorem:invariance_tors_mutation}, ``twisted part'']

	Let $v \in H^1_\rho(M_K)$ be a non zero vector.	
%\textbf{1}. \emph{Comparaison des torsions de Reidemeister ˆ coefficients tordus}. --
	The Mayer-Vietoris sequence for twisted cohomology associated to the splitting $M_K = M_1 \cup_{\mathrm{Id}} M_2$ is the following exact sequence denoted $\mathcal{H}^*$:
\begin{equation}\label{MV2}
\mathcal{H}^* : \xymatrix@1{0 \ar[r] & H^1_{\rho}(M_K) \ar[r]^-f &H^1_{\rho_1}(M_1) \oplus H^1_{\rho_2}(M_2) \ar[r]^-{i_1^* + i_2^*} & H^1_{\rho_F}(F) \ar[r]^-\bord & H^2_{\rho}(M_K) \ar[r] &0.}
\end{equation}
Applying the Mayer-Vietoris formula for the torsions gives us:
\begin{equation}\label{formule1}
T^K_\rho(v) \cdot \mathrm{Tor}(F; Ad{\rho_F}, \mathbf{h}_F) \cdot\mathrm{Tor}(\mathcal{H}^*) = (-1)^n \mathrm{Tor}(M_1; Ad \rho_1, \mathbf{h}_{M_1})\cdot\mathrm{Tor}(M_2; Ad \rho_2, \mathbf{h}_{M_2}),
\end{equation}
where $\mathbf{h}_{M_i}$ is a basis of $H^1_{\rho_i}(M_i)$, $i=1,2$,  and $\mathbf{h}_{F}$ is a basis of $H^1_{\varphi}(F)$.

	On the other hand, the Mayer-Vietoris sequence for twisted cohomology associated to the splitting $M_{K^\tau} = M_1 \cup_{\tau} M_2$ is the exact sequence denoted $\mathcal{H}^*_\tau$:
\begin{equation}\label{MV3}
\mathcal{H}^*_\tau : \xymatrix@1{0 \ar[r] & H^1_{\rho^\tau}(M_{K^\tau}) \ar[r]^-{f^\tau} &H^1_{\rho_1^\tau}(M_1) \oplus H^1_{\rho_2^\tau}(M_2) \ar[r]^-{\tau^*i_1^* + i_2^*} & H^1_{\rho_F}(F) \ar[r]^-{\bord^\tau} & H^2_{\rho^\tau}(M_{K^\tau}) \ar[r] &0.}
\end{equation}
Here, observe that $\rho^\tau_1 = {}^{x^{-1}}\!\rho_1$ et $\rho^\tau_2=\rho_2$. 
Another application of the Mayer-Vietoris formula for the torsions gives:
\begin{equation}\label{formule2}
T^{K^\tau}_{\rho^\tau}(\tau^\sharp(v)) \cdot \mathrm{Tor}(F; Ad{\rho_F}, \bar{\mathbf{h}}_F)\cdot \mathrm{Tor}(\mathcal{H}^*_\tau) = (-1)^n \mathrm{Tor}(M_1; Ad \rho^\tau_1, \bar{\mathbf{h}}_{M_1})\cdot\mathrm{Tor}(M_2; Ad  \rho_2, \bar{\mathbf{h}}_{M_2}),
\end{equation}
where $\bar{\mathbf{h}}_{M_i}$ is a basis of $H^1_{\rho_i^\tau}(M_i)$, $i=1,2$,  et $\bar{\mathbf{h}}_{F}$ is a basis of $H^1_{\varphi}(F)$. 

	Let $\mathbf{h}_{M_i}$ be any basis of $H^1_{\rho_i}(M_i)$, $i=1,2$, and $\mathbf{h}_F$ any basis of $H^1_\varphi(F)$. We choose for basis of $H^1_{\rho^\tau_1}(M_1)$ the following basis $\bar{\mathbf{h}}_{M_1}=\bar{\phi_x}(\mathbf{h}_{M_1})$, and for basis of $H^1_{\rho^\tau_2}(M_2)$ the (original) basis $\mathbf{h}_{M_2}$ of $H^1_{\rho_2}(M_2)$. Formulas~(\ref{formule1})~\&~(\ref{formule2}) give us:
$$\frac{T^{K^\tau}_{\rho^\tau}(\tau^\sharp(v))}{T^K_\rho(v)}
=\frac{\mathrm{Tor}(\mathcal{H})}{\mathrm{Tor}(\mathcal{H}_\tau)}.$$

	Now we have to compare the torsions $\mathrm{Tor}(\mathcal{H})$ and $\mathrm{Tor}(\mathcal{H}_\tau)$. 
\begin{claim}
	For a positive mutation $\tau$, one has: $$\frac{T^{K^\tau}_{\rho^\tau}(\tau^\sharp(v))}{T^K_\rho(v)}
=\frac{\mathrm{Tor}(\mathcal{H})}{\mathrm{Tor}(\mathcal{H}_\tau)} = 1.$$ %\mathrm{sgn}([\widetilde{h}^{(2)}_{\rho^\tau} / \widetilde{h}^{(2)}_{\rho}])
\end{claim}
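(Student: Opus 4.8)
The plan is to reduce the claim to the single identity $\mathrm{Tor}(\mathcal{H}) = \mathrm{Tor}(\mathcal{H}_\tau)$, since the computation just carried out already yields $T^{K^\tau}_{\rho^\tau}(\tau^\sharp(v))/T^K_\rho(v) = \mathrm{Tor}(\mathcal{H})/\mathrm{Tor}(\mathcal{H}_\tau)$. The tool I would use is the elementary fact that the torsion of a based acyclic complex is unchanged by any chain isomorphism carrying the chosen ordered basis in each degree to the chosen ordered basis of the target; each such isomorphism contributes only the determinant of the corresponding transition matrix, which is $+1$ when bases correspond without permutation. I would therefore view the two Mayer--Vietoris sequences $\mathcal{H}^*$ and $\mathcal{H}^*_\tau$ of Equations~(\ref{MV2}) and~(\ref{MV3}) as acyclic complexes and exhibit a basis-preserving chain isomorphism between them, using precisely the commutative diagram built just before Equation~(\ref{isomutant}).

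That diagram already supplies the vertical isomorphisms degree by degree, so the heart of the matter is to check that each of them sends the distinguished basis appearing in $\mathrm{Tor}(\mathcal{H})$ to the one appearing in $\mathrm{Tor}(\mathcal{H}_\tau)$. On the term $H^1_\rho(M_K)$ the vertical map is $\tau^\sharp$, which by definition sends the chosen generator $v$ to $\tau^\sharp(v)$. On the middle term the map is $\bar{\phi_x} \oplus \mathrm{Id}$, which by the choice of bases sends $\mathbf{h}_{M_1} \cup \mathbf{h}_{M_2}$ to $\bar{\phi_x}(\mathbf{h}_{M_1}) \cup \mathbf{h}_{M_2} = \bar{\mathbf{h}}_{M_1} \cup \bar{\mathbf{h}}_{M_2}$. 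On $H^1_{\rho_F}(F)$ the map is the identity and both complexes carry the same basis $\mathbf{h}_F = \bar{\mathbf{h}}_F$. Thus in all degrees up through $H^1(F)$ the bases match tautologically from the construction.

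The main obstacle is the remaining term $H^2$: I must show that the isomorphism $H^2_\rho(M_K) \to H^2_{\rho^\tau}(M_{K^\tau})$ induced by the diagram carries the distinguished generator $h^{(2)}_\rho$ to $h^{(2)}_{\rho^\tau}$, and this is exactly where positivity of $\tau$ enters. Recall that $h^{(2)}_\rho = (\phi^{(2)}_{P_\rho} \circ i^*)^{-1}(c)$ is built from the peripheral data: the invariant vector $P_\rho$ generating $H^0_\rho(\partial M_K)$, together with the fundamental class $c$ of the boundary torus. For a positive mutation the meridian may be represented by a single loop lying in the unchanged piece $M_2$, on which $\rho_2 = \rho^\tau_2$ (see the Remark preceding this proof and Lemma~\ref{lemma:varphi_i}); consequently the peripheral representations coincide, one may take $P_{\rho^\tau} = P_\rho$, and the boundary torus together with its orientation is identified. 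Tracing the construction of the distinguished generator through the commutative diagram then gives $h^{(2)}_\rho \mapsto h^{(2)}_{\rho^\tau}$ on the nose. I expect the delicate point to be the bookkeeping confirming that the induced connecting map on $H^2$ is compatible with the cup-product/Killing-form description of $\phi^{(2)}_{P_\rho}$ and with the boundary isomorphism $i^*$.

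Once all four vertical maps are seen to be basis-preserving, invariance of torsion under such isomorphisms yields $\mathrm{Tor}(\mathcal{H}) = \mathrm{Tor}(\mathcal{H}_\tau)$, whence the ratio equals $1$ and the claim follows. It is worth emphasizing that positivity is essential here: for a negative mutation the sign discrepancy recorded in Remark~\ref{remark:compare_homology_under_mutation} would obstruct the identification of the peripheral data, hence of the distinguished generators of $H^2$, and the argument would at best produce a sign $-1$ rather than the exact equality.
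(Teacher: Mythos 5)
Your proposal is correct and follows essentially the same route as the paper: the paper likewise reduces everything to the commutative ladder between the two Mayer--Vietoris sequences, uses that $\tau^*$ acts as the identity on $H^1_{\rho_F}(F)$ and that $\bar{\phi_x}\oplus\mathrm{Id}$ carries the chosen bases to the chosen bases, and isolates the only nontrivial point as the identification $h^{(2)}_\rho \mapsto h^{(2)}_{\rho^\tau}$ via the common meridian, the equality $P_{\rho^\tau}=P_\rho$ and the matching fundamental classes, which is exactly where positivity is used. Your packaging of this as invariance of torsion under a basis-preserving chain isomorphism is just a reformulation of the paper's term-by-term transition-determinant computation, and the ``bookkeeping'' you defer is carried out in the paper by the two diagrams involving the restriction to the four boundary circles of $F$.
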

\begin{proof}[Proof of the claim]
Let us compute in parallel the two Reidemeister torsions $\mathrm{Tor}(\mathcal{H})$ and $\mathrm{Tor}(\mathcal{H}_\tau)$:
\begin{enumerate}
  \item Let $\mathbf{b}$ be a basis of $\im( i_1^* + i_2^*)$, $\widetilde{h}^{(2)}_\rho$ be a lift of $h^{(2)}_\rho$ by $\bord$ and $c = f(v)$ a generator of $\im \, f$ (see Sequence~(\ref{MV1})). The torsion $\mathrm{Tor}(\mathcal{H})$ is equal to:
\[
\mathrm{Tor}(\mathcal{H}) = [\mathbf{b}\widetilde{h}^{(2)}_\rho/h_F]\cdot [c\widetilde{\mathbf{b}}/\mathbf{h}_{M_1}\mathbf{h}_{M_2}]^{-1}.
\]
  \item In the same way, let $\mathbf{b}'$ be a basis of $\im( \tau^*i_1^* + i_2^*)$, $\widetilde{h}^{(2)}_{\rho^\tau}$ be a lift of $h^{(2)}_{\rho^\tau}$ by $\bord^\tau$ and $c' = f^\tau\circ \tau^\sharp (v)$ be a generator of $\im f^\tau$ (see Sequence~(\ref{MV3})). The torsion $\mathrm{tor}(\mathcal{H}_\tau)$ is equal to:
\[
\mathrm{tor}(\mathcal{H}_\tau) = [\mathbf{b}'\widetilde{h}^{(2)}_{\rho^\tau}/\bar{\mathbf{h}}_F] \cdot [c'\widetilde{\mathbf{b}'}/\bar{\mathbf{h}}_{M_1}\bar{\mathbf{h}}_{M_2}]^{-1}.
\]
\end{enumerate}

	Further observe that $c' = \bar{\phi_x}\oplus \mathrm{Id}(c)$, thus $[c\widetilde{\mathbf{b}}/\mathbf{h}_{M_1}\mathbf{h}_{M_2}] = [c'\widetilde{\mathbf{b}'}/\bar{\mathbf{h}}_{M_1}\bar{\mathbf{h}}_{M_2}]$. As a result we obtain:
\[
\frac{T^{K^\tau}_{\rho^\tau}(\tau^\sharp(v))}{T^K_\rho(v)} = [\mathbf{b}\widetilde{h}^{(2)}_\rho/\mathbf{b}'\widetilde{h}^{(2)}_{\rho^\tau}].
\]

It remains to compute the following bases change determinant $[\mathbf{b}\widetilde{h}^{(2)}_\rho/\mathbf{b}'\widetilde{h}^{(2)}_{\rho^\tau}]$. It is easy to observe that $$[\mathbf{b}'\widetilde{h}^{(2)}_{\rho^\tau}/\mathbf{b}\widetilde{h}^{(2)}_\rho] = [\mathbf{b}'/\mathbf{b}] \cdot [\widetilde{h}^{(2)}_{\rho^\tau} / \widetilde{h}^{(2)}_{\rho}]$$
and the computation of this bases change determinant uses the following commutative diagram (see Claim~\ref{claim:commutativity}): 
\[
\xymatrix{\cdots \ar[r] & H^1_{\rho_1}(M_1) \oplus H^1_{\rho_2}(M_2) \ar[r] \ar[d]^-{\bar{\phi_x}\oplus \mathrm{Id}} & H^1_{\rho_F}(F) \ar[r]^-{\bord} \ar[d]^-{=} & H^2_\rho(M_K) \ar[r] & 0 \\
\cdots \ar[r] & H^1_{\rho^\tau_1}(M_1) \oplus H^1_{\rho^\tau_2}(M_2) \ar[r] & H^1_{\rho_F}(F)  \ar[r]^-{\bord^\tau} & H^2_{\rho^\tau}(M_{K^\tau}) \ar[r] & 0  }
\] 
The computation is divided into two parts:
\begin{enumerate}
  \item{\emph{Computation of $[\mathbf{b}'/\mathbf{b}]$.}} One can observe that $$[\mathbf{b}'/\mathbf{b}] = \det ((\overline{\tau^*i_1^*+i_2^*}) \circ (\overline{i_1^*+i_2^*})^{-1})$$ where 
$$\overline{i_1^*+i_2^*} \colon \xymatrix@1@-.5pc{H^1_{\rho_1}(M_1) \oplus H^1_{\rho_2}(M_2) / \ker(i_1^*+i_2^*) \ar[r]^-{\cong} & \im(i_1^*+i_2^*)}$$ 
and 
$$\overline{\tau^*i_1^*+i_2^*} \colon \xymatrix@1@-.5pc{H^1_{\rho_1}(M_1) \oplus H^1_{\rho_2}(M_2) / \ker(\tau^*i_1^*+i_2^*) \ar[r]^-{\cong} & \im(\tau^*i_1^*+i_2^*)},$$ are respectively induced by $i^*_1+i^*_2$ and $\tau^*i^*_1+i^*_2$. 
The action of $\tau$ on the character variety of $\pi_1(F)$ is up to conjugation  trivial (see Lemma~\ref{lemma:tau_gievn_Ad}). As a consequence $\tau^* \colon H^1_{\rho_F}(F) \to H^1_{\rho_F}(F)$ is the identity, which gives us $[\mathbf{b}'/\mathbf{b}]=1$.

  \item{\emph{Computation of $[\widetilde{h}^{(2)}_{\rho^\tau} / \widetilde{h}^{(2)}_{\rho}]$.}} In this part, we prove that $[\widetilde{h}^{(2)}_{\rho^\tau} / \widetilde{h}^{(2)}_{\rho}]= 1$ for a positive mutation. Actually, we prove that $\bord^\tau(\widetilde{h}^{(2)}_{\rho})$ is exactly the reference generator $h^{(2)}_{\rho^\tau}$ of $H^2_{\rho^\tau}(M_{K^\tau})$. 

	Let us recall the precise definition of the reference generators $h^{(2)}_{\rho}$ and $h^{(2)}_{\rho^\tau}$. In the case of a positive mutation, the meridian $\mu$ of $K$ and the meridian $\mu^\tau$ of $K^\tau$ are represented by the same circle in the boundary of the mutation sphere $F$ (this circle of course bounds a disk in $N(K)$ or in $N(K^\tau)$).

Let $\iota : \bord M_K \hookrightarrow M_K$ and $\iota_\tau : \bord M_{K^\tau} \hookrightarrow M_{K^\tau}$ be the usual inclusions. Consider $c \in H^2(\bord M_K; \IR) = \mathrm{Hom}(H_2(\bord M_K; \ZZ), \IR)$ and $c^\tau \in H^2(\bord M_{K^\tau}; \IR) = \mathrm{Hom}(H_2(\bord M_{K^\tau}; \ZZ), \IR) $ the fundamental classes in $H_2(\bord M_K; \ZZ)$ and $H_2(\bord M_{K^\tau}; \ZZ)$ respectively. By the definition, one has:
\[
P_\rho \smile \iota^*(h^{(2)}_\rho) = c  \text{ and }  P_{\rho^\tau} \smile \iota_\tau^*(h^{(2)}_{\rho^\tau}) = c^\tau.
\]
Further observe that the orientation of $S^3$ induces the same orientation on $M_K$ and $M_{K^\tau}$ because we use the invariant part $M_2$ to define it.

	The boundary of the mutation sphere $F$ is the disjoint union of four circles denoted $S^1_1, \ldots, S^1_4$ (\cf Fig.~\ref{Fig:F}). The Mayer-Vietoris sequence  combines with the restriction homomorphism onto the boundary to give us the following commutative diagrams:
\[
\xymatrix{H^2_\rho(M_K) \ar[r]^-{\iota^*} & H^2_\rho(\bord M_K) \ar[r]^-{P_\rho \smile \cdot} & H^2(\bord M_K; \IR) \\
H^1_{\rho_F}(F) \ar[u]^-{\bord} \ar[r]^-{(\iota_1^*, \ldots, \iota_4^*)} & \displaystyle{\bigoplus_{i=1}^4 H^1_{\rho_F}(S^1_i)} \ar[u] \ar[r] & \displaystyle{\bigoplus_{i=1}^4 H^1(S^1_i; \IR)} \ar[u]_-{\delta}}
\]
and
\[
\xymatrix{
H^1_{\rho_F}(F) \ar[r]^-{(\iota_1^*, \ldots, \iota_4^*)} \ar[d]_-{\bord^\tau}& \displaystyle{\bigoplus_{i=1}^4 H^1_{\rho_F}(S^1_i)} \ar[d] \ar[r] & \displaystyle{\bigoplus_{i=1}^4 H^1(S^1_i; \IR)} \ar[d]^-{\delta^\tau} \\
H^2_{\rho^\tau}(M_{K^\tau}) \ar[r]^-{\iota_\tau^*} & H^2_{\rho^\tau}(\bord M_{K^\tau}) \ar[r]^-{P_{\rho^\tau} \smile \cdot} & H^2(\bord M_{K^\tau}; \IR)}
\]
Observe that $\delta(t) = c$ if and only if $\delta^\tau(t) = c^\tau$. Thus $\bord (\widetilde{h}^{(2)}_{\rho}) = {h}^{(2)}_{\rho}$ and $\bord^\tau(\widetilde{h}^{(2)}_{\rho}) = {h}^{(2)}_{\rho^\tau}$, as a conclusion $[\widetilde{h}^{(2)}_{\rho^\tau} / \widetilde{h}^{(2)}_{\rho}]=1$.
\end{enumerate}
%	En dŽfinitive on a donc : ${\hat{\tau}^{K^\tau}_{\rho^\tau}(\tau^\sharp(v))}=\mathrm{sgn}([\widetilde{h}^{(2)}_{\rho^\tau} / \widetilde{h}^{(2)}_{\rho}]) \cdot {\hat{\tau}^K_{\rho}(v)}$. Ce qui achve la dŽmonstration de l'assertion.
\end{proof}
\end{proof}

\subsection{Computation of the sign part of the torsion}
\label{SS:torsionsign}

We are interested in \emph{the sign of the torsions} of $M_K$ and
$M_{K^\tau}$. To this purpose we compute the torsions of $C_{*}(M_K; \IR)$
and $C_{*}(M_{K^\tau}; \IR)$ by using the Mayer--Vietoris sequences with
real coefficients associated to the splittings 
$M_K = M_{1} \cup_{\mathrm{id}} M_{2}$ and 
$M_{K^\tau} = M_{1} \cup_{\tau} M_{2}$.

\begin{remark}
  From our assumption that $\tau$ is positive and
  Lemma~\ref{lemma:varphi_i} and the following
  Remark~\ref{remark:compare_homology_under_mutation}, we can choose
  the same bases for both Mayer--Vietoris sequences with real
  coefficients of $M_K = M_1 \cup_{\mathrm{id}} M_2$ and $M_{K^\tau} =
  M_1 \cup_\tau M_2$.
\end{remark}

A consequence of the preceding Propositions~\ref{prop:homology_F} \& 
\ref{prop:homology_M_i} is that the Mayer--Vietoris
sequence $\mathcal{V}_\IR$ splits into two short exact sequences:
\[
% \mathcal{V}_1 = \xymatrix{
%    0 \ar[r] & 
%      H_{1}(F;\IR) \ar[r]^-{{(i^1_{(1)}, i^2_{(1)})}} & 
%      H_{1}(M_1;\IR) \oplus H_1(M_2;\IR) \ar[r]^-{j^1_{(1)} - j^2_{(1)}} & 
%      H_1(E_K;\IR) \ar[r] & 
%    0}
\mathcal{V}_1 = 
   0 \to 
     H_{1}(F;\IR) \xrightarrow{(i^1_{(1)}, i^2_{(1)})} 
     H_{1}(M_1;\IR) \oplus H_1(M_2;\IR) \xrightarrow{j^1_{(1)} - j^2_{(1)}}
     H_1(M_K;\IR) \to
   0
\]
and
\[
% \mathcal{V}_0 = \xymatrix{
%   0  \ar[r] & 
%   H_0(F;\IR) \ar[r]^-{(i^1_{(0)}, i^2_{(0)})} & 
%   H_{0}(M_1;\IR) \oplus H_0(M_2;\IR) \ar[r]^-{j^1_{(0)} - j^2_{(0)}} & 
%   H_0(E_K;\IR) \ar[r] & 
%   0.}
\mathcal{V}_0 = 
  0  \to 
  H_0(F;\IR) \xrightarrow{(i^1_{(0)}, i^2_{(0)})} 
  H_{0}(M_1;\IR) \oplus H_0(M_2;\IR) \xrightarrow{j^1_{(0)} - j^2_{(0)}} 
  H_0(M_K;\IR) \to
  0.
\]
The Reidemeister torsion of $\mathcal{V}_\IR$ (here with real
coefficients) is thus:
\[
\Tor{\mathcal{V}_\IR}{\hbasis{*}_{\mathcal{V}_\IR}}{\emptyset} = 
\Tor{\mathcal{V}_0}{\hbasis{*}_{\mathcal{V}_0}}{\emptyset} \cdot
\Tor{\mathcal{V}_1}{\hbasis{*}_{\mathcal{V}_1}}{\emptyset}^{-1}
\]
where $\hbasis{*}_{\mathcal{V}_\IR}$, $\hbasis{*}_{\mathcal{V}_0}$ and 
$\hbasis{*}_{\mathcal{V}_1}$ denote bases of homology groups 
in the exact sequences.

Corresponding to the splitting $M_{K^\tau} = M_{1} \cup_{\tau} M_{2}$, the
Mayer--Vietoris with real coefficients $\mathcal{V}^\tau_\IR$ splits into
short exact sequences:
\[
% \mathcal{V}^\tau_1 = \xymatrix{
%   0 \ar[r] & 
%     H_{1}(F) \ar[rr]^-{(i^1_{(1)}\circ\tau_{(1)}, i^2_{(1)})} & 
%                                                            & 
%     H_{1}(M_1) \oplus H_1(M_2) \ar[r]^-{j^1_{(1)} - j^2_{(1)}} & 
%     H_1(E_{K^\tau}) \ar[r] & 
%   0}
\mathcal{V}^\tau_1 = 
  0 \to
    H_{1}(F) \xrightarrow{(i^1_{(1)}\circ\tau_{(1)}, i^2_{(1)})}
    H_{1}(M_1) \oplus H_1(M_2) \xrightarrow{j^1_{(1)} - j^2_{(1)}} 
    H_1(M_{K^\tau}) \to
  0
\]
and
\[
% \mathcal{V}^\tau_0 =\xymatrix{
%   0 \ar[r] & 
%   H_0(F) \ar[rr]^-{(i^1_{(0)}\circ\tau_{(0)}, i^2_{(0)})} &
%                                                        & 
%   H_{0}(M_1) \oplus H_0(M_2) \ar[r]^-{j^1_{(0)} - j^2_{(0)}} & 
%   H_0(E_{K^\tau}) \ar[r] & 
%   0.}
\mathcal{V}^\tau_0 =
  0 \to
  H_0(F) \xrightarrow{(i^1_{(0)}\circ\tau_{(0)}, i^2_{(0)})} 
  H_{0}(M_1) \oplus H_0(M_2) \xrightarrow{j^1_{(0)} - j^2_{(0)}}
  H_0(M_{K^\tau}) \to 
  0.
\]
Moreover, we can see the generator $\mu^\tau$ of $H_1(M_{K^\tau};\IR)$ 
is the image $j^2_{(1)}(\eta_2)$
 since $\eta_2 \in H_1(M_2;\IR)$ is a loop which bounds a disk in $N(K_2)$ (see Lemma~\ref{lemma:varphi_i}). 
The Reidemeister torsion of
$\mathcal{V}^\tau_\IR$ (here with real coefficients) is thus:
\[
  \Tor{\mathcal{V}^\tau_\IR}{\hbasis{*}_{\mathcal{V}_\IR}}{\emptyset} = 
  \Tor{\mathcal{V}_0}{\hbasis{*}_{\mathcal{V}_0}}{\emptyset} \cdot
  \Tor{\mathcal{V}_1}{\hbasis{*}_{\mathcal{V}_1}}{\emptyset}^{-1}
\]
Here we omit the bases of homology groups in 
the Mayer--Vietoris sequence~$\mathcal{V}$ in the torsions for simplicity.

We obtain:
\[
  \frac{
    \Tor{C_*(M_{K^\tau};\IR)}{\cbasis{*}_\IR}{\hbasis{*}_\IR}
  }{
    \Tor{C_*(M_K;\IR)}{\cbasis{*}_\IR}{\hbasis{*}_\IR}
  } =
  \frac{
    \Tor{\mathcal{V}^\tau_\IR}{\hbasis{*}_{\mathcal{V}_\IR}}{\emptyset}
  }{
    \Tor{\mathcal{V}_\IR}{\hbasis{*}_{\mathcal{V}_\IR}}{\emptyset}  } =
  \det \tau_{(1)} \cdot 
  \det(\bbrack{\mu}/\bbrack{\mu^\tau}) \cdot 
  (\det \tau_{(0)})^{-1}.
\]
where $C_*(M_K;\IR)$ and $C_*(M_{K^\tau};\IR)$ are endowed with the homology
orientation $\{\bbrack{pt}, \bbrack{\mu}\}$ and
$\{\bbrack{pt}, \bbrack{\mu^\tau}\}$.

Next we compute each terms in the right hand side of the preceding equality.

\begin{claim}
  The sign of $\det \tau_{(0)}$ is positive.
\end{claim}
\begin{proof}%[Sketch of the proof]
  As $\tau_{(0)}$ is just the identity map, it maps the
  class of the point to itself.
\end{proof}

\begin{claim}
  One has $\det(\bbrack{\mu}/\bbrack{\mu^\tau}) = +1$.
\end{claim}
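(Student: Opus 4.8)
The plan is to exploit the fact that, for a positive mutation, the meridian $\mu^\tau$ of $K^\tau$ has been chosen to be the very same circle as the meridian $\mu$ of $K$, both living in the unchanged piece $M_2$ near the mutation sphere $F$. Concretely, I would first recall from the construction of positive mutation that $\mu$ is the loop $\bord D^2 \times \{pt\}$ sitting in $\bord M_2$ and that $\mu^\tau$ is taken to be literally this same loop. In homological terms this means that both classes are realized by the meridian $\eta_2 \in H_1(M_2;\IR)$ of the arc $K_2$, which bounds a disk in $N(K_2)$.

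Next I would invoke Lemma~\ref{lemma:varphi_i}, which gives $j^2_{(1)}(\eta_2) = \bbrack{\mu}$ in $H_1(M_K;\IR)$, together with Remark~\ref{remark:compare_homology_under_mutation}, which states that for a positive mutation the same lemma holds for the splitting $M_{K^\tau} = M_1 \cup_\tau M_2$, so that $j^2_{(1)}(\eta_2) = \bbrack{\mu^\tau}$ in $H_1(M_{K^\tau};\IR)$. Thus, under the identification of the two one-dimensional spaces $H_1(M_K;\IR)$ and $H_1(M_{K^\tau};\IR)$ built into the parallel Mayer--Vietoris sequences $\mathcal{V}_1$ and $\mathcal{V}^\tau_1$ (whose initial terms $H_1(F;\IR)$ and $H_1(M_1;\IR)\oplus H_1(M_2;\IR)$ coincide and whose surjections $j^1_{(1)}-j^2_{(1)}$ agree on the $M_2$--factor), both generators are the image of the single class $\eta_2$ under the same map $j^2_{(1)}$.

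The only remaining point --- and the one requiring care --- is the matching of orientations, so that $\eta_2$ maps to $\mu$ and to $\mu^\tau$ with the same sign rather than opposite signs. Here I would use that the orientation of $K^\tau$ is defined through the orientation of the unchanged part $K_2$, and that both meridians are oriented by the rule $\lk(\mu, K) = \lk(\mu^\tau, K^\tau) = +1$; since $K_2$, the loop $\eta_2$, and the annuli of $\bord M_2$ carrying it are all untouched by a positive mutation, these two conventions produce the same oriented meridian. Consequently $\bbrack{\mu}$ and $\bbrack{\mu^\tau}$ correspond with coefficient $+1$, which yields $\det(\bbrack{\mu}/\bbrack{\mu^\tau}) = +1$. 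I expect the sign and orientation bookkeeping of this last step to be the main (though minor) obstacle, as it is the only place where positivity of the mutation is genuinely used.
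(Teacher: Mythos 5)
Your argument is correct and rests on exactly the same observation as the paper's one-line proof: by convention $\mu$ and $\mu^\tau$ are the very same loop in the unchanged piece $M_2$, so the two classes coincide with coefficient $+1$. The extra detail you supply (tracing both classes through $j^2_{(1)}(\eta_2)$ via Lemma~\ref{lemma:varphi_i} and checking the orientation conventions) is a faithful expansion of the paper's appeal to ``our convention,'' not a different route.
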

\begin{proof}%[Sketch of the proof]
% This equality comes from our convention and the fact that we only consider positive mutations.
  This equality comes from our convention. Both meridians $\mu$ and
  $\mu^\tau$ are given by the same loop in $M_2$ and does not have any
  influence from a mutation.
\end{proof}

\begin{claim}\label{claim:sgnTR}
  The sign of $\det \tau_{(1)}$ is positive.
\end{claim}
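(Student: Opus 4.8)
The plan is to compute the endomorphism $\tau_{(1)}$ of $H_1(F;\IR)$ explicitly and to check that its determinant equals $+1$. The starting observation is that $\tau$ is one of the three $\pi$-angle rotations of the round $2$-sphere, hence an \emph{orientation preserving} involution of $S^2$; restricting it to the complement of the four marked points, it becomes an orientation preserving self-homeomorphism of $F$. The determinant of the induced map on $H_1(F;\IR)$ is an intrinsic quantity, independent of any choice of basis and of the orientation conventions coming from $K$, so for the purpose of this claim we may forget the sign decorations of the curves $\tilde{a},\tilde{b},\tilde{c},\tilde{d}$ and work directly with the topology of the pair $(F,\tau)$.

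First I would describe the action on the generators. Recall from Proposition~\ref{prop:homology_F} and the following remark that
\[
H_1(F;\IR) \simeq \IR a \oplus \IR b \oplus \IR c \oplus \IR d \,/\,(a+b+c+d = 0),
\]
where $a,b,c,d$ are the classes of small peripheral loops encircling the four punctures. In the standard Conway-sphere configuration a $\pi$-rotation interchanges the four punctures in two pairs, so the induced permutation of $\{a,b,c,d\}$ is a product of two disjoint transpositions. Crucially, since $\tau$ preserves the orientation of $S^2$, it sends a positively oriented peripheral loop to a positively oriented peripheral loop, so the action of $\tau_{(1)}$ on these generators is \emph{exactly} this permutation, with no sign changes. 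In particular $\tau_{(1)}$ preserves the relation $a+b+c+d=0$, since it merely reorders the summands.

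Then the determinant is a short computation. Lifting to $\IR^4 = \IR a \oplus \IR b \oplus \IR c \oplus \IR d$, the permutation matrix of a product of two transpositions has determinant equal to the signature of the permutation, namely $(-1)\cdot(-1) = +1$. Moreover the line $\IR(a+b+c+d)$ along which we quotient is fixed by $\tau_{(1)}$, which acts there as the identity (determinant $+1$). Since the determinant is multiplicative across the short exact sequence
\[
0 \to \IR(a+b+c+d) \to \IR^4 \to H_1(F;\IR) \to 0,
\]
we conclude that $\det \tau_{(1)} = (+1)/(+1) = +1$, which is in particular positive.

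The only delicate point is the sign-free-ness asserted in the second step: one must make sure that an orientation preserving involution really carries each oriented peripheral loop to an oriented peripheral loop \emph{without} reversing its orientation. This is exactly where the fact that $\tau$ is a genuine rotation, rather than an orientation reversing symmetry, is used; once this is granted, the remainder is the elementary linear-algebra computation above. Note that positivity of $\det \tau_{(1)}$ then holds regardless of whether the mutation is positive or negative, the distinction between the two cases being already absorbed into the signs of $\det\tau_{(0)}$ and $\det(\bbrack{\mu}/\bbrack{\mu^\tau})$ treated in the preceding claims.
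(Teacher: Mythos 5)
Your proof is correct and follows essentially the same route as the paper: identify $\tau_{(1)}$ as the permutation of the peripheral classes $a,b,c,d$ given by a product of two disjoint transpositions and compute the determinant on the quotient $\IR^4/(a+b+c+d=0)$. You merely supply two justifications the paper leaves implicit — that orientation-preservation of $\tau$ rules out sign changes on the generators, and the multiplicativity of the determinant across the exact sequence $0 \to \IR(a+b+c+d) \to \IR^4 \to H_1(F;\IR) \to 0$ — both of which are sound.
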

\begin{proof}%[Sketch of the proof]
  The homology group $H_1(F; \ZZ)$ is isomorphic to the following
  quotient
  \[
  H_1(F) \simeq 
    \IR a \oplus \IR b \oplus \IR c \oplus \IR d /
    (a+b+c+d = 0).
  \]
  The isomorphism of $H_1(F)$ induced by a positive mutation $\tau$ is
  one of the following:
  \begin{align*}
    \tau_{(1)}&\colon a \mapsto b, \; b \mapsto a,\; c \mapsto d,\; d \mapsto c,\\
    \tau_{(1)}&\colon a \mapsto c, \; b \mapsto d,\; c \mapsto a,\; d \mapsto b,\\
    \tau_{(1)}&\colon a \mapsto d, \; b \mapsto c,\; c \mapsto b,\; d
    \mapsto a.
  \end{align*}
  In each case it is easy to see that $\det \tau_{(1)}$ is $+1$.
\end{proof}

Using the three previous claims, we conclude that:
\begin{claim}
  We have:
  \[
  \mathrm{sgn}(\Tor{\mathcal{V}^\tau}{\hbasis{*}_{\mathcal{V}_\IR}}{\emptyset}) =
  \mathrm{sgn}(\Tor{\mathcal{V}}{\hbasis{*}_{\mathcal{V}_\IR}}{\emptyset}).
  \]
\end{claim}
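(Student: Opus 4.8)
The plan is to read the claim off directly from the ratio identity established just above, combined with the three preceding claims; at this stage essentially all the work has already been done. First I would recall that, because $\tau$ is positive, Lemma~\ref{lemma:varphi_i} together with Remark~\ref{remark:compare_homology_under_mutation} permits a \emph{common} choice of homology bases $\hbasis{*}_{\mathcal{V}_\IR}$ for both Mayer--Vietoris sequences $\mathcal{V}_\IR$ and $\mathcal{V}^\tau_\IR$. This common-bases reduction is exactly what legitimizes forming the quotient of the two torsions and computing it as
\[
\frac{\Tor{\mathcal{V}^\tau_\IR}{\hbasis{*}_{\mathcal{V}_\IR}}{\emptyset}}{\Tor{\mathcal{V}_\IR}{\hbasis{*}_{\mathcal{V}_\IR}}{\emptyset}} = \det \tau_{(1)} \cdot \det(\bbrack{\mu}/\bbrack{\mu^\tau}) \cdot (\det \tau_{(0)})^{-1}.
\]

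Next I would invoke the three claims in turn. By the first claim $\det \tau_{(0)} > 0$, since $\tau_{(0)}$ is the identity on $H_0(F;\IR)$. By the second claim $\det(\bbrack{\mu}/\bbrack{\mu^\tau}) = +1$, since by our convention both meridians are realized by the same loop in $M_2$. By Claim~\ref{claim:sgnTR}, $\det \tau_{(1)} = +1$, because each of the three permutation-type automorphisms of $H_1(F;\IR)$ that can arise from a positive mutation has determinant $+1$. Hence every factor on the right-hand side above is strictly positive, so the quotient is positive. This means precisely that $\Tor{\mathcal{V}^\tau_\IR}{\hbasis{*}_{\mathcal{V}_\IR}}{\emptyset}$ and $\Tor{\mathcal{V}_\IR}{\hbasis{*}_{\mathcal{V}_\IR}}{\emptyset}$ carry the same sign, which is the assertion of the claim.

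There is no real obstacle remaining at this final step: the substantive content is already contained in the derivation of the ratio formula and in the three preceding sign computations. The one point that genuinely needs care — and which I would flag explicitly — is that the positivity of $\tau$ is what makes the common-bases reduction available, so that the sign comparison is between two torsions computed from identical homological data; once that is in place, the conclusion is merely the observation that a product of three positive real numbers is positive.
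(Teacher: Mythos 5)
Your argument is correct and is precisely the one the paper intends: the claim is stated there with the preamble ``Using the three previous claims, we conclude that,'' i.e.\ it follows immediately from the ratio formula $\Tor{\mathcal{V}^\tau_\IR}{\hbasis{*}_{\mathcal{V}_\IR}}{\emptyset}/\Tor{\mathcal{V}_\IR}{\hbasis{*}_{\mathcal{V}_\IR}}{\emptyset} = \det \tau_{(1)} \cdot \det(\bbrack{\mu}/\bbrack{\mu^\tau}) \cdot (\det \tau_{(0)})^{-1}$ together with the positivity of each factor. Your explicit flagging of the role of positivity of $\tau$ in justifying the common choice of bases is a welcome clarification but does not change the substance.
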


\section{Conclusion and open questions}
\label{S:open}

Among the Reidemeister torsion form, another important invariant in Reidemeister torsions theory is the so--called \emph{twisted Alexander invariant}, which can be understand as a non abelian version of the well--known Alexander polynomial. J. Milnor~\cite{Milnor:1962} proved that the (usual) Alexander polynomial ${\Delta_K(t)}$ can be interpreted as an abelian Reidemeister torsion: let $K \subset S^3$ be a knot and consider the abelianization $\alpha\colon G_K \to \ZZ$ defined by $\alpha(\mu) = t$ which extends into $\alpha\colon \ZZ[G_K] \to \QQ(t)$. Milnor proves that the twisted complex $C_*(M_K; \QQ(t)) = \QQ(t) \otimes C_*(\widetilde{M_K}; \ZZ)$ is acyclic and its torsion is expressed using the Alexander polynomial (up to $\pm t^{\pm m}$, $m \in \NN$):
\[
\mathrm{tors}(M_K; \alpha) = \frac{\Delta_K(t)}{t-1}.
\]
The twisted Alexander invariant of $K$, twisted by a generic representation $\rho \colon G_K \to \SL$, is:
\[
\Delta_K^{Ad\circ \rho}(t) = \mathrm{tors}(C_*(M_K; Ad\circ \rho \otimes \alpha)).
\]

In \cite{YY:Fourier}, Y. Yamaguchi proved a formula which make a link between the twisted Alexander invariant $\Delta_K^{Ad\circ \rho}(t)$ -- an acyclic torsion -- with a special value of the torsion form $\tau^K_\rho$, if $\rho \colon G_K \to \SL$.

Numerical computations made by N. Dunfield, S. Friedl and N. Jackson~\cite{DFJ} show that the twisted Alexander invariant is not invariant by mutation, but the torsion form is. An interesting question will be to understand and to characterize the ``default'' between $\Delta_K^{Ad\circ \rho}(t)$ and $\Delta_{K^\tau}^{Ad\circ \rho^\tau}(t)$ and especially at the discrete and faithful representation corresponding to the complete structure for hyperbolic knots.

%%%%%%%%%%%%%%%%%%%%%%%%%%%%%%%%%%%%%%%%%%%%%%%%%%%%%%%%%%%%%%%%%%%%
% reference 
%%%%%%%%%%%%%%%%%%%%%%%%%%%%%%%%%%%%%%%%%%%%%%%%%%%%%%%%%%%%%%%%%%%%
%\bibliographystyle{siam}
\bibliographystyle{QT}
\bibliography{ref}
%%%%%%%%%%%%%%%%%%%%%%%%%%%%%%%%%%%%%%%%%%%%%%%%%%%%%%%%%%%%%%%%%%%%
% end of reference 
%%%%%%%%%%%%%%%%%%%%%%%%%%%%%%%%%%%%%%%%%%%%%%%%%%%%%%%%%%%%%%%%%%%%

%%%%%%%%%%%%%%%%%%%%%%%%%%%%%%%%%%%%%%%%%%%%%%%%%%%%%%%%%%%%%%%%%%%%
\end{document}